  \newtheorem{thm}{Theorem}[section]
  \newtheorem{lem}[thm]{Lemma}
  \newtheorem{prop}[thm]{Proposition}
  \newtheorem{cor}[thm]{Corollary}
  \theoremstyle{definition}
  \newtheorem{defn}[thm]{Definition}
  \newtheorem{exm}[thm]{Example}
  \newtheorem{rmk}[thm]{Remark}
 \newcommand\ra{\rightarrow}
 \newcommand\s{\subseteq}
 \numberwithin{equation}{section}
\begin{document}
\title {$L$-Ordered and $L$-Lattice Ordered Groups}
\author{R. A. Borzooei$^{1}$, A. Dvure\v{c}enskij$^{2,3}$, and O. Zahiri$^{1}$ \\
{\small\em  $^{1}$ Department of Mathematics, Shahid Beheshti University, G. C., Tehran, Iran}\\
{\small\em $^2$ Mathematical Institute,  Slovak Academy of Sciences, \v Stef\'anikova 49, SK-814 73 Bratislava, Slovakia} \\
{\small\em $^3$ Depart. Algebra  Geom.,  Palack\'{y} Univer.17. listopadu 12,
CZ-771 46 Olomouc, Czech Republic} \\
{\small\em  borzooei@sbu.ac.ir \quad dvurecen@mat.savba.sk\quad   om.zahiri@gmail.com} }
\date{}
\maketitle


\begin{abstract}
This paper pursues an investigation on groups equipped with an $L$-ordered relation, where $L$ is a fixed complete complete Heyting algebra.
First, by the concept of join and meet on an $L$-ordered set, the notion of an $L$-lattice is introduced and  some related results are obtained. Then we applied them to define an $L$-lattice ordered group. We also introduce convex $L$-subgroups to construct a quotient $L$-ordered group.
At last, a relation between the positive cone of an $L$-ordered group
and special type of elements of $L^G$ is found, where $G$ is a group.
\end{abstract}

{\small Mathematics Subject Classification: 06A15, 03E72, 06D72, 06D20.}

Keywords: Fuzzy order; $L$-frame; $L$-lattice ordered group.


\section{Introduction}

Fuzzy orders can be divided into two groups. The first group follows  Zadeh's paper \cite{Zadeh}
and uses an antisymmetry condition based on a crisp relation of equality (see e.g. \cite{F1,F2,F3,F4}),
the second one (see e.g. \cite{G1,Bel1,Bel2,G2,G3})
involves a fuzzy relation of equivalence or its specific case of fuzzy equality.
The former is more frequent, the latter is younger (it was initiated by H\"{o}hle in \cite{Hol}).
Mutual transformations of some representatives of both groups were described e.g. in \cite{XZF,YL}.

In \cite{ZL}, Zhang and Liu defined a kind of an $L$-frame by a pair $(A,i_A)$, where A is a classical frame and
$i_A:L\ra A$ is a frame morphism. For a stratified $L$-topological space $(X,\delta)$, the pair $(\delta,i_X)$
is one of this kind of $L$-frames, where $i_X:L\ra \delta$, is a map which sends $a\in L$ to the constant map with the value $a$.
Conversely, a point of an $L$-frame $(A,i_A)$ is a frame morphism $p:(A,i_A)\ra (L,id_L)$ satisfying $p\circ i_A=id_L$ and
$Lpt(A)$ denotes the set of all points of $(A,i_A)$. Then $\{\Phi_x:Lpt(A)\ra L|\ \forall\, p\in Lpt(A),\ \Phi_x(p)=p(x)\}$
is a stratified $L$-topology on $Lpt(A)$. By these two assignments, Zhang and Liu constructed an adjunction between
$SL-Top$ and $L-Loc$ and consequently they established the Stone representation theorem for distributive lattices by means of this adjunction.
They pointed out that, from the viewpoint of lattice theory, Rodabaugh's fuzzy version of the Stone
representation theory is just one and it has nothing different from the classical one. While in our opinion, Zhang-Liu's $L$-frames
preserve many features and also seem to have no strong difference from a crisp one.

Recently, based on complete Heyting algebras, Fan and Zhang \cite{Fan,ZF},
studied quantitative domains through fuzzy
set theory. Their approach first defines a fuzzy partial order, specifically a degree function, on a non-empty set.
Then they define and study fuzzy directed subsets and (continuous) fuzzy directed complete posets (dcpos for short).
Moreover, Yao \cite{YSI} and   Yao and Shi \cite{Y2S} pursued an investigation on quantitative domains via fuzzy sets. They defined the notions of
fuzzy Scott topology on fuzzy dcpos, Scott convergence and topological convergence for stratified $L$-filters and study them. They also,
shown that the category of fuzzy dcpos with fuzzy Scott continuous maps are Cartesian-closed.

In \cite{Y2}, Yao introduced an $L$-frame by an $L$-ordered set equipped with some further conditions. It is a
complete $L$-ordered set with the meet operation having a right fuzzy adjoint. They established an
adjunction between the category of stratified $L$-topological spaces and the category of $L$-locales,
the opposite category of this kind of $L$-frames.  Moreover, Yao and Shi, \cite{Yao-Shi},
defined on fuzzy dcpos an $L$-topology, called the fuzzy Scott topology, and then they studied its
properties. They defined Scott convergence, topological convergence for stratified $L$-filters and showed that
a fuzzy dcpo is continuous if and only if, for any stratified $L$-filter, the fuzzy Scott convergence coincides with convergence
with respect to the fuzzy Scott topology.

The content of this paper is organized as follows. In Section 2, some notions and results about ordered groups and $L$-ordered sets
are recalled. In Section 3, the concept of an $L$-lattice introduced and some related results are obtained. In Section 4,
the notions of an $L$-ordered group, a convex $L$-subgroup and positive cone of an $L$-ordered group are introduced. Then
a relation between positive cones and $L$-ordered relations of a group is verified and a quotient
$L$-ordered groups is constructed by a convex normal $L$-subgroups. Also, we state a general form for Riesz's decomposition
property in $L$-lattice ordered groups.
\section{Preliminaries}

Let $(L;\vee,\wedge,0,1)$  be a bounded lattice. For $a,b\in L$, we say that $c\in L$ is a {\it relative pseudocomplement} of $a$
with respect to $b$ if $c$ is the largest element with $a\wedge c\leq b$ and we denote it by $a\ra b$.
A lattice $(L;\vee,\wedge)$ is said to be a {\it Heyting} algebra if the relative pseudocomplement $a\to b$ exists for all elements $a,b \in L$.
A {\it frame} is a complete lattice $(L;\vee,\wedge)$ satisfying the infinite distributive law $a\wedge \bigvee S=\bigvee_{s\in S}(a\wedge s)$ for
every $a\in L$ and $S\subseteq L$. It is well known that $L$ is a frame if and only if it is a complete Heyting algebra.
In fact, if $(L;\vee,\wedge)$ is a frame, then for each $a,b\in L$, the relative pseudocomplement of $a$ with respect to $b$,  is the element
$a\ra b:=\vee\{x\in L|\ a\wedge x\leq b\}$. In the following we list some important properties of Heyting algebras,
for more details relevant to frames and Heyting algebras we refer to \cite{Johnston} and \cite[Section 7]{Blyth}:

\begin{enumerate}
\item[(i)]\ $(x\wedge y)\ra z=x\ra (y\ra z)$;
\item[(ii)]\ $x\ra (y\wedge z)=(x\ra y)\wedge (x\ra z)$;
\item[(iii)]\ $(x\vee y)\ra z=(x\ra z)\wedge (y\ra z)$.
\end{enumerate}

From now on, in this paper, $(L;\vee,\wedge,0,1)$ or simply $L$ always denotes a frame.
\begin{defn}\cite{Bel1,Bel2,ZF,ZF2}
Let $P$ be a set and $e : P\times P\ra L$ be a map. The pair $(P;e)$ is called an {\it $L$-ordered set} if for all $x,y,z\in P$
\begin{itemize}
 \item[{\rm(E1)}] $e(x,x)=1$;
 \item[{\rm(E2)}] $e(x,y)\wedge e(y,z)\leq e(x,z)$;
 \item[{\rm(E3)}] $e(x,y)=e(y,x)=1$ implies $x=y$.
\end{itemize}
In an $L$-ordered set $(P;e)$, the map $e$ is called an {\it $L$-order relation} on $P$.
If $(P;\leq)$ is a classical poset, then $(P;\chi_{\leq})$ is an $L$-ordered set, where $\chi_{\leq}$ is the
characteristic function of $\leq$. Moreover, for each $L$-ordered set $(P;e)$,
the set $\leq_e=\{(x,y)\in P\times P|\ e(x,y)=1\}$
is a crisp partial order on $P$ and $(P;\leq_e)$ is a poset. Assume that $(P;e)$ is an $L$-ordered set and $\phi\in L^P$.
Define $\downarrow \phi\in L^P$ and $\uparrow \phi\in L^P$ \cite{Fuentes,ZF}, as follows:
 $$\downarrow\phi(x)=\vee_{x'\in P}(\phi(x')\wedge e(x,x')), \quad \quad \uparrow\phi(x)=\vee_{x'\in P}
 (\phi(x')\wedge e(x',x)),\quad \forall x\in P.$$
\end{defn}
\begin{defn}\cite{Y1,YL}
 A map $f:(P;e_P)\ra (Q;e_Q)$ between two $L$-ordered sets is called {\it monotone} if for all $x,y\in P$,
$e_P(x,y)\leq e_Q(f(x),f(y))$.
\end{defn}

\begin{defn}\cite{Y1,YL,YSI}
Let $(P;e_P)$ and $(Q;e_Q)$ be two $L$-ordered sets and $f:P\ra Q$ and $g:Q\ra P$ be two monotone maps. The pair
$(f,g)$ is called a {\it fuzzy Galois connection} between $P$ and $Q$ if $e_Q(f(x),y)=e_P(x,g(y))$
for all $x\in P$ and $y\in Q$, where $f$ is called the {\it fuzzy left adjoint} of $g$ and dually $g$ the {\it fuzzy right adjoint} of $f$.
\end{defn}

\begin{defn}\cite{Y1,YL}
Let $(P;e)$ be an $L$-ordered set and $S\in L^P$. An element $x_0$ is called a {\it join} (respectively, a {\it meet}) of $S$, in symbols $x_0=\sqcup S$
(respectively, $x_0=\sqcap S$),  if for all $x\in P$,
\begin{itemize}
\item[{\rm (J1)}] $S(x)\leq e(x,x_0)$ (respectively, (M1), $S(x)\leq e(x_0,x))$;
\item[{\rm (J2)}] $\wedge_{y\in P} (S(y)\ra e(y,x))\leq e(x_0,x)$ (respectively, (M2), $\wedge_{y\in P} (S(y)\ra e(x,y))\leq e(x,x_0))$.
\end{itemize}
If a join and a meet of $S$ exist, then they are unique (see \cite{ZF}).
\end{defn}

\begin{thm}\label{join and meet}{\em\cite[Theorem 2.2]{ZF}}
Let $(P;e)$ be an $L$-ordered set and $S\in L^P$. Then
\begin{itemize}
\item[{\rm (i)}] $x_0=\sqcup S$ if and only if $e(x_0,x)=\wedge_{y\in P} (S(y)\ra e(y,x))$;
\item[{\rm (ii)}] $x_0=\sqcap S$ if and only if $e(x,x_0)=\wedge_{y\in P} (S(y)\ra  e(x,y))$.
\end{itemize}
\end{thm}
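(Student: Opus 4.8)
The plan is to prove (i) directly from the definition of join, and then obtain (ii) by the evident order-reversing symmetry between the axioms (M1)--(M2) and (J1)--(J2). The only algebraic fact I will use repeatedly is the Heyting adjunction $a\wedge b\leq c \iff a\leq b\ra c$, together with the $L$-order axioms (E1) (reflexivity) and (E2) (transitivity); nothing deeper than these is needed.

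For the forward implication of (i), suppose $x_0=\sqcup S$. Axiom (J2) already gives $\wedge_{y\in P}(S(y)\ra e(y,x))\leq e(x_0,x)$ for every $x$, so it suffices to establish the reverse inequality. Fix $x,y\in P$. By (J1) we have $S(y)\leq e(y,x_0)$, hence $e(x_0,x)\wedge S(y)\leq e(y,x_0)\wedge e(x_0,x)\leq e(y,x)$ by (E2); by the Heyting adjunction this is exactly $e(x_0,x)\leq S(y)\ra e(y,x)$. Taking the meet over all $y\in P$ yields $e(x_0,x)\leq \wedge_{y\in P}(S(y)\ra e(y,x))$, and combining the two inequalities gives the claimed identity.

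For the converse of (i), assume $e(x_0,x)=\wedge_{y\in P}(S(y)\ra e(y,x))$ for all $x\in P$. Then (J2) holds trivially, being even an equality. To recover (J1), specialise the identity at $x:=x_0$: by (E1) we get $1=e(x_0,x_0)=\wedge_{y\in P}(S(y)\ra e(y,x_0))$, so $S(y)\ra e(y,x_0)=1$, i.e.\ $S(y)\leq e(y,x_0)$, for every $y$; in particular $S(x)\leq e(x,x_0)$, which is (J1). Hence $x_0=\sqcup S$.

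Part (ii) is the order-dual statement: systematically replacing $e(a,b)$ by $e(b,a)$ turns (J1), (J2) into (M1), (M2), turns the characterising identity of (i) into that of (ii), and leaves (E1)--(E3) intact, so the same two short arguments apply verbatim. I do not expect any genuine obstacle here; the only points that require a little care are getting the direction of the Heyting adjunction right and making sure the meet over $y$ is taken \emph{after} applying transitivity, not before — the usual bookkeeping hazards in fuzzy-order arguments of this type.
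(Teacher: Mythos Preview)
Your proof is correct. The paper does not supply its own proof of this theorem: it is quoted in the preliminaries with a citation to \cite[Theorem 2.2]{ZF}, so there is nothing to compare against. Your argument is exactly the standard one --- (J1) plus transitivity (E2) plus the Heyting adjunction for the forward inequality, specialisation at $x=x_0$ with (E1) for the converse, and order-duality for (ii) --- and would be an appropriate proof to insert if one were wanted.
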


In \cite{ZXF}, Zhang  et al. introduced a complete $L$-ordered set.
An $L$-ordered set $(P;e)$ is called {\it complete} if, for all $S\in L^P$, $\sqcap S$
and $\sqcup S$ exist. If $(P;e)$ is a complete
$L$-ordered set, then $(P;\leq_e)$ is a complete lattice,
where $\vee S=\sqcup \chi_S$ and $\wedge S=\sqcap\chi_S$, for any $S\s P$.

Suppose that $X$ and $Y$ are two sets. For each mapping $f:X\ra Y$,
we have a mapping $f^{\ra}:L^X\ra L^Y$, defined by
$$
(\forall y\in Y)(\forall A\in L^X) (f^{\ra}(A)(y)=\vee_{f(x)=y}A(x)).
$$
For simplicity, we use $f(A)$ instead of $f^{\ra}(A)$ for all $A\in L^X$.

\begin{thm}\label{adjoint}{\em\cite[Theorem 3.5]{YL}}
Let $f:(P,e_P)\ra (Q,e_Q)$ and $g:(Q,e_Q)\ra (P,e_P)$ be two maps.
\begin{itemize}
 \item[{\rm (i)}] If $P$ is complete, then $f$ is monotone and has a fuzzy right adjoint if and only if
 $f(\sqcup S)=\sqcup f^{\ra}(S)$ for all $S\in L^P$.
 \item[{\rm (ii)}] If $Q$ is complete, then $g$ is monotone and has a fuzzy left adjoint if and only if
 $g(\sqcap S)=\sqcap g^{\ra}(S)$ for all $S\in L^Q$.
\end{itemize}
\end{thm}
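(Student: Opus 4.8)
The statement to prove is Theorem~\ref{adjoint}, characterizing when a map between $L$-ordered sets is monotone with a fuzzy right (resp.\ left) adjoint in terms of preservation of joins (resp.\ meets). I will prove (i); part~(ii) follows by the obvious order-dual argument (replacing $e$ by its transpose $e^{op}(x,y)=e(y,x)$, swapping $\sqcup$ with $\sqcap$, and $f$ with $g$), so I will only sketch that at the end. Throughout I rely on Theorem~\ref{join and meet}, which says $x_0=\sqcup S$ iff $e(x_0,x)=\bigwedge_{y\in P}\bigl(S(y)\ra e(y,x)\bigr)$ for all $x$, and on the basic Heyting-algebra identities (i)--(iii) listed in the Preliminaries.

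\emph{The ``if'' direction.} Assume $f(\sqcup S)=\sqcup f^{\ra}(S)$ for every $S\in L^P$. First I check $f$ is monotone: for $x,y\in P$, apply the hypothesis to a suitable singleton-like fuzzy set. Concretely, take $S=\{x\}$, the fuzzy point with $S(x)=1$ and $S(x')=0$ otherwise; then $\sqcup S=x$ (easy from the definition of join), and $f^{\ra}(S)$ is the fuzzy point at $f(x)$, whose join is $f(x)$. That only gives a triviality, so instead I should feed in the fuzzy subset $S=e(x,-)\colon z\mapsto e(x,z)$, whose join is $x$ by (E1)--(E2) and Theorem~\ref{join and meet}; pushing forward, $f^{\ra}(S)(w)=\bigvee_{f(z)=w}e(x,z)\ge e_Q(f(x),w)$ when $w=f(y)$... this route needs care. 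The clean way: define the candidate right adjoint $g\colon Q\to P$ by $g(q)=\sqcup S_q$ where $S_q(p)=e_Q(f(p),q)$ (this uses completeness of $P$). One then verifies directly from Theorem~\ref{join and meet}(i) that $e_P(p,g(q))=\bigwedge_{p'}\bigl(S_q(p')\ra e_P(p',g(q))\bigr)$... actually the adjunction identity $e_Q(f(p),q)=e_P(p,g(q))$ is what must be shown, and for this I use that $g(q)=\sqcup S_q$ means $e_P(g(q),p)=\bigwedge_{p'}(e_Q(f(p'),q)\ra e_P(p',p))$, combined with $f(\sqcup S_q)=\sqcup f^{\ra}(S_q)$ to transport the computation to $Q$. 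Monotonicity of $f$ and of $g$ then drops out of standard Galois-connection bookkeeping once the adjunction identity holds.

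\emph{The ``only if'' direction.} Assume $f$ is monotone with fuzzy right adjoint $g$, so $e_Q(f(p),q)=e_P(p,g(q))$ for all $p,q$. Let $S\in L^P$ and put $x_0=\sqcup S$; I must show $f(x_0)=\sqcup f^{\ra}(S)$, i.e.\ by Theorem~\ref{join and meet}(i) that $e_Q(f(x_0),q)=\bigwedge_{w\in Q}\bigl(f^{\ra}(S)(w)\ra e_Q(w,q)\bigr)$ for every $q\in Q$. Using the adjunction, the left side equals $e_P(x_0,g(q))$, which by Theorem~\ref{join and meet}(i) (applied to $x_0=\sqcup S$) equals $\bigwedge_{p\in P}\bigl(S(p)\ra e_P(p,g(q))\bigr)=\bigwedge_{p\in P}\bigl(S(p)\ra e_Q(f(p),q)\bigr)$. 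For the right side, expand $f^{\ra}(S)(w)=\bigvee_{f(p)=w}S(p)$ and use Heyting identity (iii), $(\bigvee_i a_i)\ra b=\bigwedge_i(a_i\ra b)$ (in its infinitary frame form), to rewrite $\bigwedge_{w}\bigl(\bigl(\bigvee_{f(p)=w}S(p)\bigr)\ra e_Q(w,q)\bigr)=\bigwedge_{w}\bigwedge_{f(p)=w}\bigl(S(p)\ra e_Q(f(p),q)\bigr)=\bigwedge_{p\in P}\bigl(S(p)\ra e_Q(f(p),q)\bigr)$. The two sides agree, so $f(x_0)=\sqcup f^{\ra}(S)$.

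\emph{Main obstacle.} The delicate point is the ``if'' direction: constructing the right adjoint $g$ purely from the join-preservation hypothesis and verifying the adjunction identity without circularity. One must be careful that $g(q):=\sqcup\,\bigl(p\mapsto e_Q(f(p),q)\bigr)$ is well-defined (needs $P$ complete, which is assumed), that it is monotone in $q$, and that both inequalities $e_Q(f(p),q)\le e_P(p,g(q))$ and $e_P(p,g(q))\le e_Q(f(p),q)$ hold — the first is essentially (J1) for the defining join, while the second requires feeding $S_q$ through the hypothesis $f(\sqcup S_q)=\sqcup f^{\ra}(S_q)$ and then invoking (J2)/Theorem~\ref{join and meet} on the $Q$-side. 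The infinitary distributive law $a\wedge\bigvee S=\bigvee(a\wedge s)$ — equivalently the frame hypothesis on $L$ — is used implicitly whenever suprema and the residuation $\ra$ are interchanged, so it is worth flagging that this is exactly where completeness of the Heyting algebra $L$ (not just Heyting-ness) is consumed.
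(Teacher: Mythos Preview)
The paper does not supply a proof of this theorem: it is quoted verbatim from \cite[Theorem~3.5]{YL} in the Preliminaries section, so there is no argument in the paper to compare your proposal against.

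As to the proposal itself: your ``only if'' direction is complete and correct. Your ``if'' direction has the right architecture---define $g(q)=\sqcup S_q$ with $S_q(p)=e_Q(f(p),q)$, get one half of the adjunction identity from (J1), and get the other half by applying the join-preservation hypothesis to $S_q$---but the write-up is a sketch with two concrete holes. First, your monotonicity argument for $f$ wanders: you try crisp singletons, then $e_P(x,-)$, then abandon both for ``the clean way'' without actually giving it. The clean route is to apply the hypothesis to $S=\downarrow x$ with $(\downarrow x)(p)=e_P(p,x)$; one checks $\sqcup(\downarrow x)=x$, whence $f(x)=\sqcup f^{\ra}(\downarrow x)$, and (J1) on the $Q$-side at $w=f(y)$ gives $e_P(y,x)\le f^{\ra}(\downarrow x)(f(y))\le e_Q(f(y),f(x))$. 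Second, the inequality $e_P(p,g(q))\le e_Q(f(p),q)$ needs the intermediate fact $e_Q(f(g(q)),q)=1$; this follows from $f(g(q))=\sqcup f^{\ra}(S_q)$ via Theorem~\ref{join and meet}(i) once you observe that $f^{\ra}(S_q)(w)$ is either $e_Q(w,q)$ (when $w\in\mathrm{Im}\,f$) or $0$, so every term $f^{\ra}(S_q)(w)\ra e_Q(w,q)$ equals $1$. Then monotonicity of $f$ together with (E2) yields $e_P(p,g(q))\le e_Q(f(p),f(g(q)))\le e_Q(f(p),q)$. Note the order: you must establish monotonicity of $f$ \emph{before} the adjunction identity, not after, since the latter step uses it; your closing remark that ``monotonicity of $f$ and of $g$ then drops out \ldots\ once the adjunction identity holds'' risks a circularity for $f$ (it is fine for $g$).
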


\begin{defn}\cite{Y2}
Let $(P;e)$ be a complete $L$-ordered set and $\wedge$ be the meet operation on $(P;\leq_e)$ such that, for any $a\in P$,
the map $\wedge_a()$, $b\mapsto a\wedge b$, is monotone. We call $(P;e)$ an $L$-{\it frame} if, for any $a\in P$,
$\wedge_a$ has a fuzzy right adjoint, or equivalently, the following identity holds:

${\rm(FIDL)}$ \ \ $\wedge_a \sqcup S=\sqcup(\wedge_a)^{\ra}(S)$ for all $S\in L^P$ and $a\in P$.
\end{defn}

\begin{defn}\cite{Blyth}
An {\it ordered group} is an structure $(G;+,0,\leq)$ such that $(G;+,0)$ is a group and $(G;\leq)$ is a poset satisfying
the following condition:
\begin{itemize}
\item for any $x,y,a\in G$, $x\leq y$ implies that $a+ x\leq a+ y$ and $x+ a\leq y+ a$.
\end{itemize}
By a {\it lattice ordered group} we shall mean an ordered group $(G;+,0,\leq)$ such that $(G;\leq)$ is a lattice.
\end{defn}

\begin{thm}{\em\cite{Blyth}}\label{group-dis}
If $(G;+,0,\leq)$ is a lattice ordered group, then for every $x\in G$, the
translation maps $x\wedge -:y \longmapsto x\wedge y$, $x\vee -:y\longmapsto x\vee y$,  $x+ -:y\longmapsto x+ y$
and $-( ):y\longmapsto -y$
are a complete $\vee$-morphism and a $\wedge$-morphism, respectively.
\end{thm}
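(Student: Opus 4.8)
The plan is to deduce the statement from two elementary facts about the poset $(G;\leq)$: for each fixed $x\in G$ the left translation $\lambda_x\colon y\longmapsto x+y$ (and, symmetrically, the right translation $y\longmapsto y+x$) is an order automorphism, while inversion $\nu\colon y\longmapsto -y$ is an order anti-automorphism; and an order isomorphism preserves every join and meet that exists, whereas an order anti-isomorphism interchanges them. Everything then follows by specializing this principle.

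First I would verify that $\lambda_x$ is an order automorphism. It is a bijection with inverse $\lambda_{-x}$, and applying the compatibility axiom of an ordered group once to $x$ and once to $-x$ shows $y\leq z$ if and only if $x+y\leq x+z$; hence $\lambda_x$ both preserves and reflects the order, so it is an order isomorphism of $(G;\leq)$ onto itself. Next I would record the general principle: if $h$ is an order isomorphism of a poset and $\bigvee_{i\in I}a_i$ exists, then $h(\bigvee_i a_i)=\bigvee_i h(a_i)$ --- indeed $h(\bigvee_i a_i)$ is an upper bound of $\{h(a_i)\}$ by monotonicity of $h$, and if $u$ is any upper bound then $h^{-1}(u)$ bounds $\{a_i\}$, so $\bigvee_i a_i\leq h^{-1}(u)$ and hence $h(\bigvee_i a_i)\leq u$; the dual statement for meets is obtained symmetrically. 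Applying this to $\lambda_x$ (and identically to right translation) yields that $x+ -$ is simultaneously a complete $\vee$-morphism and a complete $\wedge$-morphism, and in particular satisfies the finitary identities $x+(y\vee z)=(x+y)\vee(x+z)$ and $x+(y\wedge z)=(x+y)\wedge(x+z)$.

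For inversion I would check that $y\leq z$ implies $-z\leq -y$, by adding $-z$ on the left and $-y$ on the right; since $\nu$ is an involutive bijection the converse follows, so $\nu$ is an order anti-automorphism of $(G;\leq)$. By the anti-isomorphism version of the principle above, $\nu$ carries every existing join to the corresponding meet and every existing meet to the corresponding join: $-\bigvee_i a_i=\bigwedge_i(-a_i)$, $-\bigwedge_i a_i=\bigvee_i(-a_i)$, and in particular $-(y\vee z)=(-y)\wedge(-z)$. Finally, the maps $x\vee -$ and $x\wedge -$ are a complete $\vee$-morphism and a complete $\wedge$-morphism respectively for a purely lattice-theoretic reason needing no group structure at all: whenever $\bigvee_i a_i$ exists, both $x\vee\bigvee_i a_i$ and $\bigvee_i(x\vee a_i)$ equal the join of the family $\{x\}\cup\{a_i:i\in I\}$, and dually.

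I do not expect a genuine obstacle here; the one point that must be handled with some care is the adjective \emph{complete}, since a lattice ordered group need not be conditionally complete, so each assertion should be read as ``preserves (resp.\ interchanges) precisely those joins and meets that exist in $G$''. This is harmless, because the argument via order (anti-)isomorphisms is uniform in the index set $I$, the classical distributive-type identities being merely the case $|I|=2$.
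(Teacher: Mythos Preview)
The paper does not prove this theorem: it is quoted from Blyth's textbook as a preliminary result, with no argument supplied. So there is no ``paper's proof'' to compare against, and your approach is precisely the standard textbook one --- translations are order automorphisms, inversion is an order anti-automorphism, and order (anti-)isomorphisms preserve (interchange) all existing suprema and infima. That part is entirely correct.

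One point of caution. The statement as printed in the paper is somewhat garbled (four maps, two properties, and a lone ``respectively''), but on the reading consistent with Blyth it includes the \emph{cross} infinite distributive laws: $x\wedge-$ preserves every existing join and $x\vee-$ preserves every existing meet. Your final paragraph handles only the \emph{parallel} cases $x\vee\bigvee_i a_i=\bigvee_i(x\vee a_i)$ and its dual, which, as you note, hold in any lattice. The cross laws genuinely require the group structure; they fail in general lattices. The quickest way to obtain them from what you have already established is via the $\ell$-group identity $a\wedge b=a-(a\vee b)+b$ (equivalently $a\vee b=a-(a\wedge b)+b$), which writes $x\wedge-$ as a composite of translations, inversion, and $x\vee-$, each of which you have shown to preserve or reverse joins as needed. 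If the theorem is meant to include these cross laws --- and the paper's later use of it is consistent with that reading --- you should add this one-line reduction; otherwise your argument is complete.
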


\begin{defn}\label{L-subgroup}\cite{Mer}
Let $(G;+,0)$ be a group. By an $L$-{\it subgroup} of $(G;+,0)$
we mean an element $S\in L^G$ satisfying the following conditions:
\begin{itemize}
\item[(i)] $S(x)\wedge S(y)\leq S(x+y)$ for all $x,y\in G$;
\item[(ii)] $S(x)=S(-x)$ for all $x\in G$.
\end{itemize}
An $L$-subgroup $S$ of $(G;+,0)$ is called {\it normal} if $S(y)\leq S(x+y-x)$ for all
$x,y\in G$. Clearly, if $S$ is normal, then $S(y)= S(x+y-x)$ for all $x,y\in G$. Moreover, if
$S_1,\ldots,S_n\in L^G$, then $S_1+\cdots +S_n$ define by
$(S_1+\cdots +S_n)(y)=\vee\{S(x_1)\wedge S_(x_2)\wedge \cdots \wedge S(x_n)|\ x_1+\cdots+x_n=y\}$  for all $y\in G$.
\end{defn}

\begin{prop}{\rm\cite[Section 1]{Mer}}\label{Quo}
Let $S$ be a normal $L$-subgroup of a group $(G;+,0)$. Then the following conditions hold:
\begin{itemize}
\item[{\em (i)}] $S(x)\leq S(0)$ for all $x\in G$.
\item[{\em (ii)}] If $\alpha=S(0)$, then $S^{-1}(\alpha)$ is a normal subgroup $G$ and $S(x+y)=S(y+x)$ for all $x,y\in G$.
\item[{\em (iii)}] Consider the set $G+S=\{x+S|\ x\in G\}$. Define the binary operation $\oplus$ on $G+S$ by
$(x+S)\oplus (y+S)=(x+y)+S$. Then $(G+S;\oplus,S)$ is a group, where $-(x+S)=-x+S$ for all $x\in G$.
\item[{\em (iv)}] $(\frac{G}{S^{-1}(\alpha)};+,S^{-1}(\alpha))$ and $(G+S;\oplus,S)$ are isomorphic and so
$a+S^{-1}(\alpha)=b+S^{-1}(\alpha)$ if and only if $a+S=b+S$ for all $a,b\in G$
\end{itemize}
\end{prop}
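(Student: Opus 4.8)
The plan is to settle (i) and (ii) by direct computation with the $L$-subgroup axioms, and then to deduce (iii) and (iv) simultaneously from a single correspondence between the ``fuzzy cosets'' $a+S$ and the ordinary cosets of the crisp subgroup $S^{-1}(\alpha)$.

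For (i): since $S(x)=S(-x)$ and $S(x)\wedge S(-x)\le S(x+(-x))=S(0)$, it follows that $S(x)\le S(0)$. For (ii), put $\alpha=S(0)$; by (i) we have $S^{-1}(\alpha)=\{x\in G\mid S(x)\ge\alpha\}$. Clearly $0\in S^{-1}(\alpha)$; if $S(x)\ge\alpha$ and $S(y)\ge\alpha$ then $S(x+y)\ge S(x)\wedge S(y)\ge\alpha$ and $S(-x)=S(x)\ge\alpha$, so $S^{-1}(\alpha)$ is a subgroup, and normality of $S$ gives $S(x+y-x)\ge S(y)\ge\alpha$, so $S^{-1}(\alpha)$ is normal in $G$. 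Finally, from $-x+(x+y)+x=y+x$ together with normality one gets $S(y+x)\ge S(x+y)$, and symmetrically $S(x+y)\ge S(y+x)$; hence $S(x+y)=S(y+x)$.

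For (iii) and (iv) I would first record that $a+S\in L^G$ is given by $(a+S)(g)=S(-a+g)$ (this is the $n=1$ instance of the sum $\chi_{\{a\}}+S$ from Definition~\ref{L-subgroup}). The crucial step is the chain of equivalences
$$a+S=b+S\iff -b+a\in S^{-1}(\alpha)\iff a+S^{-1}(\alpha)=b+S^{-1}(\alpha).$$
The right-hand equivalence is elementary group theory. For the left one, evaluating $a+S=b+S$ at $g=a$ yields $S(-b+a)=S(0)=\alpha$; conversely, if $S(-b+a)=\alpha$, then writing $-a+g=(-a+b)+(-b+g)$, using $S(-a+b)=S(-b+a)=\alpha$, and invoking (i) to absorb the meet with $\alpha$, one obtains $S(-a+g)\ge S(-b+g)$, with the reverse inequality following by symmetry. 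Consequently the assignment $\Psi\colon a+S^{-1}(\alpha)\mapsto a+S$ is a well-defined bijection from $G/S^{-1}(\alpha)$, which is a group by (ii), onto the set $G+S$. Transporting the group structure along $\Psi$ then shows at once that $\oplus$ is well defined, that $(G+S;\oplus,S)$ is a group with unit $0+S=S$ and $-(a+S)=-a+S$, and that $\Psi$ is a group isomorphism --- which is precisely (iii), (iv), and the concluding ``if and only if''. (A shortcut for the well-definedness and associativity of $\oplus$: from (i) and (ii) one checks $S+S=S$ and $S+\chi_{\{b\}}=\chi_{\{b\}}+S$, so that $(a+b)+S=\chi_{\{a\}}+S+\chi_{\{b\}}+S$ and $\oplus$ is literally the restriction of fuzzy addition to $G+S$.)

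The hard part will be the left equivalence in the displayed line: it is the only place where the $L$-valued data genuinely enter, and it relies on fixing the convention for $a+S$ consistently with the crisp coset and on using (i) to turn $\alpha\wedge S(-b+g)$ back into $S(-b+g)$. Everything past that correspondence is a routine transfer of structure.
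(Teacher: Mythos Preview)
The paper does not supply its own proof of this proposition: it is stated in the preliminaries with a citation to \cite{Mer} and left unproved. So there is nothing to compare your argument against.

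On its own merits your proposal is correct and is essentially the standard argument from fuzzy group theory. Parts (i) and (ii) are handled exactly as one would expect. Your reduction of (iii) and (iv) to the single equivalence $a+S=b+S \iff -b+a\in S^{-1}(\alpha)$ is the right economy: once that is in place, everything else is transport of structure along the bijection $\Psi$. The computation of that equivalence is fine; just be sure to state explicitly that the convention $a+S:=\chi_{\{a\}}+S$, hence $(a+S)(g)=S(-a+g)$, is the one intended in the paper (it is consistent with the formula in Definition~\ref{L-subgroup} and with the way $G+S$ is used later, e.g.\ in Theorem~\ref{quotient}). The parenthetical shortcut via $S+S=S$ and $S+\chi_{\{b\}}=\chi_{\{b\}}+S$ is a nice touch, though strictly unnecessary once $\Psi$ is established.
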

\section{$L$-lattices}

In this section, we use the notions of join and meet on an $L$-ordered set and define an $L$-lattice.
In fact, this definition is a generalization of an $L$-complete lattice definition.
Then we find some results related to $L$-lattices which will be used in the next section.

From now on, in this paper,  $L^{P}_{\bullet }$ denotes the set of all elements
of $L^{P}$ with finite support, where support $S$ is the set $supp(S):=\{x\in P|\ 0<S(x)\}$.
\begin{defn}\label{3.1}
An $L$-ordered set $(P;e)$ is called an {\it $L$-lattice} if, for all $S\in L^{P}_{\bullet }$, $\sqcup S$ and
$\sqcap S$ exist.
\end{defn}
We must note that any complete $L$-lattice is an $L$-lattice.
\begin{prop}\label{3.2}
If $(P;e)$ is an $L$-lattice, then $(P;\leq_e)$ is a lattice.
\end{prop}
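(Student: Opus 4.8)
The plan is to show that any two-element subset of $P$ has both a least upper bound and a greatest lower bound with respect to the crisp order $\leq_e$; since $(P;\leq_e)$ is already a poset (as recorded after the definition of an $L$-ordered set), this is exactly what is needed. Given $a,b\in P$, I would work with the characteristic function $S:=\chi_{\{a,b\}}\in L^P$. Its support is $\{a,b\}$, which is finite, so $S\in L^P_\bullet$, and hence by the definition of an $L$-lattice both $\sqcup S$ and $\sqcap S$ exist in $P$. The claim will be that $\sqcup S$ is the join and $\sqcap S$ the meet of $\{a,b\}$ in $(P;\leq_e)$.

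For the join, put $x_0:=\sqcup S$. Condition (J1) applied at $a$ and at $b$ gives $1=S(a)\le e(a,x_0)$ and $1=S(b)\le e(b,x_0)$, so $e(a,x_0)=e(b,x_0)=1$, i.e. $a\le_e x_0$ and $b\le_e x_0$; thus $x_0$ is an upper bound of $\{a,b\}$ in $(P;\leq_e)$. Now let $w\in P$ be any upper bound, so $e(a,w)=e(b,w)=1$. I compute the left-hand side of (J2) with the free variable specialized to $w$: for $y\in\{a,b\}$ one has $S(y)\ra e(y,w)=1\ra 1=1$, and for $y\notin\{a,b\}$ one has $S(y)=0$, so $S(y)\ra e(y,w)=0\ra e(y,w)=1$. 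Hence $\wedge_{y\in P}\big(S(y)\ra e(y,w)\big)=1$, and (J2) yields $1\le e(x_0,w)$, i.e. $x_0\le_e w$. Therefore $x_0$ is the least upper bound of $\{a,b\}$.

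The argument for the meet is entirely symmetric. With $x_1:=\sqcap S$, condition (M1) gives $e(x_1,a)=e(x_1,b)=1$, so $x_1$ is a lower bound of $\{a,b\}$; and for any lower bound $w$ (so $e(w,a)=e(w,b)=1$), the same case analysis shows $\wedge_{y\in P}\big(S(y)\ra e(w,y)\big)=1$, whence (M2) forces $e(w,x_1)=1$, i.e. $w\le_e x_1$. So $x_1$ is the greatest lower bound of $\{a,b\}$. Combining the two parts, $(P;\leq_e)$ is a lattice.

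I do not expect a genuine obstacle here: the proof is just a direct unwinding of the definitions of $\sqcup$ and $\sqcap$ together with the finiteness of $\mathrm{supp}(\chi_{\{a,b\}})$. The only point that warrants a line of care is the evaluation of the infima in (J2) and (M2), where one uses the elementary Heyting-algebra facts $1\ra 1=1$ and $0\ra c=1$ for every $c\in L$. As an alternative one could invoke Theorem \ref{join and meet} to write $e(x_0,-)=\wedge_{y\in P}(S(y)\ra e(y,-))$ and $e(-,x_1)=\wedge_{y\in P}(S(y)\ra e(-,y))$ explicitly and read off the same conclusions, but the direct route via (J1)–(J2) and (M1)–(M2) is shortest.
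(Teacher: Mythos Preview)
Your proof is correct and follows essentially the same approach as the paper: both take the characteristic function $\chi_{\{a,b\}}\in L^P_\bullet$ and show that its $L$-join and $L$-meet realize the crisp join and meet in $(P;\leq_e)$. The only cosmetic difference is that the paper invokes Theorem~\ref{join and meet} to obtain $e(\sqcup S,v)=e(a,v)\wedge e(b,v)$ and reads everything off from that identity, whereas you argue directly from (J1)--(J2) and (M1)--(M2); you even note this alternative yourself.
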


\begin{proof}
Let $x$ and $y$ be arbitrary elements of $P$. Then $\chi_{\{x,y\}}\in L^{P}_{\bullet }$.
Suppose that $\sqcup \chi_{\{x,y\}}=b$
and $\sqcap \chi_{\{x,y\}}=a$, for some $a,b\in P$.
By Theorem \ref{join and meet}, for every $v\in P$, we have
$$e(b,v)=\wedge_{u\in P} (\chi_{\{x,y\}}(u)\ra e(u,v))=1\ra e(x,v)\wedge 1\ra e(y,v)=e(x,v)\wedge e(y,v)$$
and so $1=e(b,b)=e(x,b)\wedge e(y,b)$. Thus, $b$ is an upper bound for $\{x,y\}$. Now, let $x\leq_e c$  and
$y\leq_e c$ for some $c\in P$. Then $e(b,c)=e(x,c)\wedge e(y,c)=1$. It follows that $b\leq_e c$, so
$b=x\vee y$, in the poset $(P;\leq_e)$. In a similar way, we can show that $a=x\wedge y$.
Therefore, $(P;\leq_e)$ is a lattice.
\end{proof}

\begin{defn}\label{3.3}
An $L$-lattice $(P;e)$ is called {\it distributive}  if, for all
$S\in L^{P}_{\bullet} $ and all $a\in P$, the following conditions hold:
\begin{eqnarray}
\label{dis} a\wedge \sqcup S=\sqcup(a\wedge S)\ \ \mbox{ and } \ \ a\vee \sqcap S=\sqcap (a\vee S),
\end{eqnarray}
where $(a\vee S)(y)=\vee\{S(x)|\ x\in P, \ a\vee x=y \}$ and $(a\wedge S)(y)=\vee\{S(x)|\ x\in P,\  a\wedge x=y \}$
for all $y\in P$.
\end{defn}

It is obvious that, if $(P;e)$ holds on one of the
conditions in Definition \ref{3.3}, then $(P;\leq_e)$ is a distributive lattice.
In the next remark, we want to verify some important properties of an $L$-lattice.

\begin{rmk}\label{3.4}
Let $(P;e)$ be an $L$-lattice and $S\in L^{P}_{\bullet}$.
Then there exist $x_1,\ldots,x_n\in P$ such that $supp(S)=\{x_1,\ldots,x_n\}$ for some $n\in \mathbb{N}$.

{\rm (i)}  Suppose that $\sqcap S=a$ and $\sqcup S=b$. Then, for all $x\in P$, by Theorem \ref{join and meet},
\begin{eqnarray*}
e(b,x)&=&\wedge_{y\in P}( S(y)\ra e(y,x))\\
&=&\wedge_{i=1}^{n} (S(x_i)\ra e(x_i,x)), \mbox{ since $s(y)\ra e(y\ra x)=1$ for all $y\in P-\{x_1,\ldots,x_n\}$ }
\end{eqnarray*}
In a similar way, we have $e(x,a)=\wedge_{i=1}^{n} (S(x_i)\ra e(x,x_i))$ for all $x\in P$.

{\rm (ii)} By Proposition \ref{3.2}, $\wedge^{n}_{i=1} x_i$ and $\vee^{n}_{i=1} x_i$ exist in the lattice
$(P;\leq_e)$. Also, by (i),
\begin{eqnarray*}
e(\wedge^{n}_{i=1} x_i,a)=\wedge_{i=1}^{n} (S(x_i)\ra e(\wedge^{n}_{i=1} x_i,x_i))=\wedge_{i=1}^{n}(S(x_i)\ra 1)=1\\
e(b,\vee^{n}_{i=1} x_i)= \wedge_{i=1}^{n} (S(x_i)\ra e(x_i,\vee^{n}_{i=1} x_i))=\wedge_{i=1}^{n}(S(x_i)\ra 1)=1,
\end{eqnarray*}
so, $\wedge^{n}_{i=1} x_i\leq_e a$ and $b\leq_e \vee^{n}_{i=1} x_i$.

{\rm (iii)} Suppose that $S_1:P\ra L$ and $\hat{S_1}:P\ra L$ are defined by
\begin{equation*}
S_1(x)=\left\{\begin{array}{ll}
S(x) & \text{ $x=x_1$},\\
0 & \text{ otherwise}.\\
\end{array} \right.
\hspace{1cm} \hat{S_1}(x)=\left\{\begin{array}{ll}
S(x) & \text{if $x\in \{x_2,x_3,\ldots,x_n\}$},\\
0 & \text{ otherwise}.\\
\end{array} \right.
\end{equation*}
Then $S_1,\hat{S_1}\in L^{P}_{\bullet}$. Let $\sqcap S_1=a_1$ and $\sqcap \hat{S_1}=\hat{a_1}$, for
some $a_1,\hat{a_1}\in P$. We claim that $a=a_1\wedge \hat{a_1}$. By $\sqcap S=a$,
and (i), we obtain that $e(x,a)=\wedge_{i=1}^{n} (S(x_i)\ra e(x,x_i))$, for
all $x\in P$. It follows that $e(a_1\wedge \hat{a_1},a)= \wedge_{i=1}^{n} (S(x_i)\ra e(a_1\wedge \hat{a_1},x_i))$.
Since $\sqcap S_1=a_1$ and $\sqcap \hat{S_1}=\hat{a_1}$, then by (i),
$S(x_1)\ra e(a_1\wedge \hat{a_1},x_1)=e(a_1\wedge \hat{a_1},a_1)=1$ and
$1=e(a_1\wedge \hat{a_1},\hat{a_1})= \wedge_{i=2}^{n} (S(x_i)\ra e(a_1\wedge \hat{a_1},x_i))$. Hence,
$e(a_1\wedge \hat{a_1},a)=1$ whence $a_1\wedge \hat{a_1}\leq a$. Also, from $\sqcap S_1=a_1$,
$\sqcap \hat{S_1}=\hat{a_1}$ and (i), we conclude that
$e(a,a_1)=S(x_1)\ra e(a,x_1)$ and $e(a,\hat{a_1})=\wedge_{i=2}^{n} (S(x_i)\ra e(a,x_i))$, so
$$e(a,a_1)\wedge e(a,\hat{a_1})=(S(x_1)\ra e(a,x_1))\wedge (\wedge_{i=2}^{n} (S(x_i)\ra e(a,x_i)))=\wedge_{i=1}^{n} (S(x_i)\ra e(a,x_i))=e(a,a)=1.$$
Thus, $a\leq a_1$ and $a\leq \hat{a_1}$. Therefore, $a=a_1\wedge \hat{a_1}$.
By a similar way, we can show that if $\sqcup S=b$, $\sqcup S_1=b_1$ and $\sqcup \hat{S_1}=\hat{b_1}$, for
some $b,b_1,\hat{b_1}\in P$, then $b=b_1\vee \hat{b_1}$.

{\rm (iv)} Let $a,b,c\in P$ such that $a\leq_e b$. Then by {\rm (E2)}, we get that
$$e(c,a)=e(c,a\wedge b)=e(c,a\wedge b)\wedge e(a\wedge b,b)\leq e(c,b).$$
\end{rmk}

\begin{prop}\label{3.4.2}
Let $(P;e)$ be an $L$-lattice and $S\in L^{P}_{\bullet}$. Then the following conditions hold:
\begin{itemize}
\item[{\rm (i)}] $e(a,x\wedge y)=e(a,x)\wedge e(a,y)$ for all $x,y,a\in P$;
\item[{\rm (ii)}] $e(x\vee y,a)=e(x,a)\wedge e(y,a)$ for all $x,y,a\in P$;
\item[{\rm (iii)}]  if $\vee_{y\in P}S(y)=1$, then $a\wedge \sqcap S=\sqcap (a\wedge S)$;
\item[{\rm (iv)}]  if $\vee_{y\in P}S(y)=1$, then $a\vee \sqcup S=\sqcup (a\vee S)$.
\end{itemize}
\end{prop}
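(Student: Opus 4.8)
The plan is to prove the four identities by reducing everything to Theorem \ref{join and meet} and the Heyting-algebra identities (i)--(iii) listed in the preliminaries, together with the lattice facts already established in Proposition \ref{3.2} and Remark \ref{3.4}.

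For (i): both sides are elements of $L$, so I would compute $e(a,x\wedge y)$ directly. By Proposition \ref{3.2}, $x\wedge y = \sqcap\chi_{\{x,y\}}$ in the sense of the $L$-order, so Remark \ref{3.4}(i) (or Theorem \ref{join and meet}(ii)) gives $e(a,x\wedge y)=\bigwedge_{u\in P}(\chi_{\{x,y\}}(u)\ra e(a,u)) = (1\ra e(a,x))\wedge(1\ra e(a,y)) = e(a,x)\wedge e(a,y)$. Statement (ii) is the exact dual, using that $x\vee y=\sqcup\chi_{\{x,y\}}$ and Theorem \ref{join and meet}(i). These two are short and essentially bookkeeping.

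For (iii): write $\operatorname{supp}(S)=\{x_1,\dots,x_n\}$, let $m=\sqcap S$, and let $t=\sqcap(a\wedge S)$; note $(a\wedge S)$ still has finite support (contained in $\{a\wedge x_1,\dots,a\wedge x_n\}$), so $t$ exists. I would verify the two defining conditions (M1),(M2) for $t'=a\wedge m$ against the function $a\wedge S$. For (M1), I need $(a\wedge S)(y)\le e(a\wedge m,y)$ for all $y$; since $(a\wedge S)(y)=\bigvee\{S(x_i): a\wedge x_i=y\}$, it suffices to show $S(x_i)\le e(a\wedge m, a\wedge x_i)$ for each $i$, and by part (i) of this proposition $e(a\wedge m, a\wedge x_i)=e(a\wedge m,a)\wedge e(a\wedge m,x_i)$; the first factor is $1$ and for the second, $S(x_i)\le e(m,x_i)\le e(a\wedge m,x_i)$ using Remark \ref{3.4}(iv) (monotonicity $m\wedge x_i \le_e$-manipulation) — actually $e(m,x_i)\le e(a\wedge m, x_i)$ because $a\wedge m\le_e m$ and Remark \ref{3.4}(iv). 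For (M2), I need $\bigwedge_{y}((a\wedge S)(y)\ra e(z,y))\le e(z,a\wedge m)$; by part (i), $e(z,a\wedge m)=e(z,a)\wedge e(z,m)$, and I would bound the left side below by each of $e(z,a)$ and $e(z,m)$ separately. The bound by $e(z,m)$ follows from the Heyting identities by rewriting $\bigwedge_i(S(x_i)\ra e(z,a\wedge x_i)) = \bigwedge_i\big((S(x_i)\ra e(z,a))\wedge(S(x_i)\ra e(z,x_i))\big)$, whose second block is $e(z,m)$ by Remark \ref{3.4}(i), while the first block dominates $e(z,a)$ precisely because of the hypothesis $\bigvee_i S(x_i)=\bigvee_{y\in P}S(y)=1$: indeed $\bigwedge_i(S(x_i)\ra e(z,a)) = (\bigvee_i S(x_i))\ra e(z,a) = 1\ra e(z,a)=e(z,a)$ by Heyting identity (iii). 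That use of $\bigvee S(y)=1$ is exactly the point where this differs from the distributive-lattice identity \eqref{dis}, and it is the step I expect to be the crux. Part (iv) is the order-dual argument, using (ii) in place of (i) and Theorem \ref{join and meet}(i) for joins.

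The main obstacle is handling the set $\{y : (a\wedge S)(y)\neq 0\}$ cleanly: because the map $x\mapsto a\wedge x$ need not be injective, $(a\wedge S)$ is a sup of the $S(x_i)$ over fibers, and one must make sure the "$\bigwedge_y$" over the image set really does collapse to $\bigwedge_i$ over $i=1,\dots,n$; this is routine but needs the observation that $(A\vee B)\ra C=(A\ra C)\wedge(B\ra C)$ to merge the fiber contributions, and that terms with $S(x_i)=0$ contribute $1$ and can be dropped. Once that reduction is in place, everything is a formal manipulation with $\ra$ and the already-proved facts, so I would not belabor it.
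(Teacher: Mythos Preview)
Your proposal is correct and follows essentially the same route as the paper: the same Heyting manipulations, the same use of $\bigvee_i S(x_i)=1$ to collapse $\bigwedge_i(S(x_i)\ra e(z,a))$ to $e(z,a)$, and the same appeal to part (i) to split $e(z,a\wedge x_i)$. The paper packages (iii) as a single equality via Theorem~\ref{join and meet}(ii) rather than checking (M1) and (M2) separately, but your (M2) computation already yields that equality, making your separate (M1) verification redundant.
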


\begin{proof}
(i) Put $x,y,a\in P$. Define $T:P\ra L$ by $T=\chi_{\{x,y\}}$. Then by the assumptions, $\sqcap T$ exists.
Let $m=\sqcap T$. By Theorem \ref{join and meet}(ii) and Remark \ref{3.4}(i),  for all $t\in P$,
$e(t,m)=(T(x)\ra e(t,x))\wedge (T(y)\ra e(t,y))=e(t,x)\wedge e(t,y)$. By Remark \ref{3.4}(ii),
we know that $x\wedge y\leq m$. Also, $1=e(m,m)=e(m,x)\wedge e(m,y)$, so $m\leq x$ and $m\leq y$.
It follows that, $m=x\wedge y$. Hence, for all $t\in P$,
$e(t,x\wedge y)=e(t,x)\wedge e(t,y)$. Since $x$ and $y$ are arbitrary elements of $P$, we get  $e(a,x\wedge y)=e(a,x)\wedge e(a,y)$
for all $x,y,a\in P$.

(ii) The proof of this part is similar to (i).

(iii) Let $Supp(S)=\{x_1,x_2,\ldots,x_n\}$ and $\sqcap S=m$ for some $m\in P$. Then by Remark \ref{3.4}(i),
$e(x,m)=\wedge_{i=1}^{n} (S(x_i)\ra e(x,x_i))$ for all $x\in P$. Let $T=a\wedge S$. It suffices to show that $\sqcap T=a\wedge m$.
For all $y\in P$, $T(y)=\vee_{\{t\in P|\ a\wedge t=y\}}S(t)$. Since $Supp(S)=\{x_1,x_2,\ldots,x_n\}$,
then $Supp(T)=\{a\wedge x_1,a\wedge x_2,\ldots,a\wedge x_n\}$, thus $T\in L^{P}_{\bullet}$ and $\sqcap T$ exists. Put $x\in P$.
$\wedge_{y\in P}(T(y)\ra e(x,y))=\wedge_{y\in P}((\vee_{\{t\in P|\ a\wedge t=y\}}S(t))\ra e(x,y))=\wedge_{y\in P}\wedge_{a\wedge t=y}(S(t)\ra e(x,y))=
\wedge_{t\in P}(S(t)\ra e(x,a\wedge t))$. Since $Supp(S)=\{x_1,x_2,\ldots,x_n\}$, we get that
\begin{eqnarray*}
\wedge_{y\in P}(T(y)\ra e(x,y))&=&\wedge_{i=1}^{n}(S(x_i)\ra e(x,a\wedge x_i))\\
&=&\wedge_{i=1}^{n}(S(x_i)\ra (e(x,a)\wedge e(x,x_i))), \mbox{ by (i)},\\
&=& (\wedge_{i=1}^{n}(S(x_i)\ra e(x,a)))\wedge (\wedge_{i=1}^{n}(S(x_i)\ra e(x,x_i)))\\
&=& (\wedge_{i=1}^{n}(S(x_i)\ra e(x,a)))\wedge e(x,m), \mbox{ since $\sqcap S=m$}, \\
&=& [(\vee_{i=1}^{n}S(x_i))\ra e(x,a)]\wedge e(x,m)\\
&=& e(x,a)\wedge e(x,m), \mbox{ since $\vee_{y\in P}S(y)=1$ }\\
&=&e(x,a\wedge m),  \mbox{ by (i)}.
\end{eqnarray*}
Therefore, by Theorem \ref{join and meet}(ii), $\sqcap T=a\wedge m$.

(iv) The proof of this part is similar to (iii).
\end{proof}

\begin{cor}
Let $(P;e)$ be an $L$-lattice and $S\in L^{P}_{\bullet}$ such that $\vee_{y\in P}S(y)=1$. Then
$a\wedge \sqcup(a\vee S)=a$ and $a\vee \sqcap(a\wedge S)=a$ for all $a\in P$.
\end{cor}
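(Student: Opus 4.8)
The plan is to deduce both identities directly from Proposition~\ref{3.4.2} together with the absorption laws of the lattice $(P;\leq_e)$, which is a lattice by Proposition~\ref{3.2}. Since $S\in L^{P}_{\bullet}$, both $b:=\sqcup S$ and $m:=\sqcap S$ exist. I would first record the two auxiliary observations that are used implicitly: writing $supp(S)=\{x_1,\dots,x_n\}$, the fuzzy sets $a\vee S$ and $a\wedge S$ again have finite support (contained in $\{a\vee x_1,\dots,a\vee x_n\}$ and in $\{a\wedge x_1,\dots,a\wedge x_n\}$, exactly as computed in the proof of Proposition~\ref{3.4.2}(iii)), and moreover $\vee_{y\in P}(a\vee S)(y)=\vee_{x\in P}S(x)=1$, and similarly $\vee_{y\in P}(a\wedge S)(y)=1$; hence every fuzzy set appearing below satisfies the running hypotheses.

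For the first identity, apply Proposition~\ref{3.4.2}(iv) to $S$: from $\vee_{y\in P}S(y)=1$ we obtain $a\vee\sqcup S=\sqcup(a\vee S)$, that is, $\sqcup(a\vee S)=a\vee b$. Consequently $a\wedge\sqcup(a\vee S)=a\wedge(a\vee b)=a$, the last step being the absorption law in $(P;\leq_e)$. Dually, Proposition~\ref{3.4.2}(iii) applied to $S$ gives $a\wedge\sqcap S=\sqcap(a\wedge S)$, i.e.\ $\sqcap(a\wedge S)=a\wedge m$, whence $a\vee\sqcap(a\wedge S)=a\vee(a\wedge m)=a$ by the dual absorption law. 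This would complete the argument.

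There is no serious obstacle here: the statement is an immediate corollary, and the only point that requires attention is the bookkeeping of the first paragraph --- verifying that $a\vee S$ and $a\wedge S$ remain in $L^{P}_{\bullet}$ and inherit the normalization $\vee_{y\in P}S(y)=1$ --- after which Proposition~\ref{3.4.2} and the lattice identities $a\wedge(a\vee b)=a$ and $a\vee(a\wedge m)=a$ do all the work.
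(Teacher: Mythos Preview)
Your proof is correct and is exactly the intended argument: the paper states this corollary immediately after Proposition~\ref{3.4.2} without proof, and your derivation via Proposition~\ref{3.4.2}(iii)--(iv) together with the absorption laws in the lattice $(P;\leq_e)$ is precisely what is meant. One minor remark: the normalization check $\vee_{y}(a\vee S)(y)=1$ in your first paragraph is not actually needed, since you only apply Proposition~\ref{3.4.2} to $S$ itself; the finite-support observation for $a\vee S$ and $a\wedge S$ suffices to guarantee the existence of the relevant joins and meets.
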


\begin{prop}\label{semilattice}
Let $(P;e)$ be an $L$-lattice and $S,T\in L^{P}$ such that $\sqcup S$, $\sqcap S$, $\sqcup T$ and $\sqcap T$ exist.
Consider the maps $S\cup T:P\ra L$ and $S\cap T:P\ra L$, which are defined by $(S\cup T)(x)=S(x)\vee T(x)$ and
$(S\cap T)(x)=S(x)\wedge T(x)$, respectively. Then $\sqcup (S\cup T)=\sqcup S\vee \sqcup T$ and $\sqcap (S\cup T)=\sqcap S\wedge \sqcap T$.
\end{prop}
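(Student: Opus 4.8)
The plan is to compute $e(\sqcup S \vee \sqcup T, v)$ for an arbitrary $v \in P$ using Theorem \ref{join and meet}(i) and Proposition \ref{3.4.2}(ii), and show it matches the characterization of $\sqcup(S\cup T)$; the argument for meets is dual. First I would set $b_S = \sqcup S$ and $b_T = \sqcup T$, which exist by hypothesis, and note that by Proposition \ref{3.2} the join $b_S \vee b_T$ exists in the poset $(P;\leq_e)$. Then for any $v \in P$, Proposition \ref{3.4.2}(ii) gives $e(b_S \vee b_T, v) = e(b_S, v) \wedge e(b_T, v)$. Applying Theorem \ref{join and meet}(i) to each factor, this equals
$$\Bigl(\bigwedge_{y\in P}(S(y)\ra e(y,v))\Bigr) \wedge \Bigl(\bigwedge_{y\in P}(T(y)\ra e(y,v))\Bigr) = \bigwedge_{y\in P}\bigl((S(y)\ra e(y,v)) \wedge (T(y)\ra e(y,v))\bigr).$$
By Heyting algebra property (iii), $(S(y)\ra e(y,v)) \wedge (T(y)\ra e(y,v)) = (S(y)\vee T(y)) \ra e(y,v) = (S\cup T)(y) \ra e(y,v)$. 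Hence $e(b_S \vee b_T, v) = \bigwedge_{y\in P}((S\cup T)(y)\ra e(y,v))$ for all $v$, and by Theorem \ref{join and meet}(i) this says precisely $b_S \vee b_T = \sqcup(S\cup T)$ — provided $\sqcup(S\cup T)$ exists, which the converse direction of Theorem \ref{join and meet}(i) indeed guarantees once we exhibit an element whose $e(-,v)$ matches the required infimum.

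For the meet part, I would run the dual computation: set $a_S = \sqcap S$, $a_T = \sqcap T$, use Proposition \ref{3.4.2}(i) to get $e(v, a_S \wedge a_T) = e(v,a_S)\wedge e(v,a_T)$, then Theorem \ref{join and meet}(ii) to expand each factor as $\bigwedge_{y}(S(y)\ra e(v,y))$ and $\bigwedge_y(T(y)\ra e(v,y))$ respectively, and finally Heyting property (iii) again to combine $(S(y)\ra e(v,y))\wedge(T(y)\ra e(v,y)) = (S\cup T)(y)\ra e(v,y)$. This yields $e(v, a_S\wedge a_T) = \bigwedge_y((S\cup T)(y)\ra e(v,y))$ for all $v$, so by Theorem \ref{join and meet}(ii), $a_S \wedge a_T = \sqcap(S\cup T)$.

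The one point requiring a little care — and the likely main obstacle — is justifying that $\sqcup(S\cup T)$ and $\sqcap(S\cup T)$ actually exist before invoking the "only if" direction of Theorem \ref{join and meet}; I cannot simply appeal to Definition \ref{3.1}, because $S$ and $T$ (and hence $S\cup T$) need not have finite support. The resolution is that the "if" direction of Theorem \ref{join and meet}(i) already does this for us: we have produced a concrete element, namely $b_S \vee b_T$, satisfying $e(b_S\vee b_T, v) = \bigwedge_y((S\cup T)(y)\ra e(y,v))$ for every $v$, and Theorem \ref{join and meet}(i) states that any such element \emph{is} the join $\sqcup(S\cup T)$. So existence comes for free from the computation, and no finiteness assumption on the supports is needed. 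The remaining steps are the routine Heyting-algebra manipulations indicated above.
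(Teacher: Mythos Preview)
Your proposal is correct and follows essentially the same route as the paper: both arguments combine Theorem~\ref{join and meet} with Proposition~\ref{3.4.2}(i)--(ii) and the Heyting identity $(\alpha\vee\beta)\ra\gamma=(\alpha\ra\gamma)\wedge(\beta\ra\gamma)$ to show that $e(\sqcup S\vee\sqcup T,v)=\bigwedge_{y}((S\cup T)(y)\ra e(y,v))$ (and dually for meets). The only cosmetic difference is the direction of the computation---the paper starts from the right-hand infimum and simplifies to $e(J_1\vee J_2,x)$, whereas you start from $e(b_S\vee b_T,v)$ and expand---and your explicit remark on existence via the ``if'' direction of Theorem~\ref{join and meet} is a small clarifying addition.
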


\begin{proof}
Let $\sqcup S=J_1$ and  $\sqcup T=J_2$.
For each $x\in P$, $e(J_1,x)=\wedge_{y\in P}(S(y)\ra e(y,x))$ and
$e(J_2,x)=\wedge_{y\in P}(T(y)\ra e(y,x))$. Put $x\in P$. We have
\begin{eqnarray*}
\wedge_{y\in P}((S\cup T)(y)\ra e(y,x))&=& \wedge_{y\in P}((S(y)\vee T(y))\ra e(y,x))\\
&=& \wedge_{y\in P}([S(y)\ra e(y,x)]\wedge [T(y)\ra e(y,x)])\\
&=& (\wedge_{y\in P}[S(y)\ra e(y,x)])\wedge (\wedge_{y\in P}[T(y)\ra e(y,x)])\\
&=& e(J_1,x)\wedge e(J_2,x)\\
&=& e(J_1\vee J_2,x),\mbox{ by Proposition \ref{3.4.2}(ii),}
\end{eqnarray*}
so, Theorem \ref{join and meet} implies that $\sqcup (S\cup T)=J_1\vee J_2$.
Moreover, for all $x\in P$, $e(x,m_1)=\wedge_{y\in P}(S(y)\ra e(x,y))$ and
$e(x,m_2)=\wedge_{y\in P}(T(y)\ra e(x,y))$. For all $x\in P$, we have
\begin{eqnarray*}
\wedge_{y\in P}((S\cup T)(y)\ra e(x,y))&=& \wedge_{y\in P}((S(y)\vee T(y))\ra e(x,y))\\
&=& \wedge_{y\in P}([S(y)\ra e(x,y)]\wedge [T(y)\ra e(x,y)])\\
&=& (\wedge_{y\in P}[S(y)\ra e(x,y)])\wedge (\wedge_{y\in P}[T(y)\ra e(x,y)])\\
&=& e(x,m_1)\wedge e(x,m_2)\\
&=& e(x,m_1\wedge m_2),\mbox{ by Proposition \ref{3.4.2}(i)}.
\end{eqnarray*}
Hence, by Theorem \ref{join and meet}, $\sqcap (S\cup T)=m_1\wedge m_2$.
\end{proof}

It follows from Proposition \ref{semilattice} that  if $A=\{S\in L^{P} |\ \sqcap S\mbox{ and } \sqcup S \mbox{ exist}\}$, then
$S\cup T\in A$. It can be easily shown that $(A;\cup)$ is an upper semilattice.
It follows that $(L^{P}_{\bullet};\cup)$ is an upper semilattice, too.

\section{$L$-ordered groups} 

In this section, the concept of an $L$-ordered group is introduced; $L$ stands for a fixed bounded lattice $L$. It is an
group $(G;+,0)$ written additively
equipped with an $L$-order relation $e:G\times G\ra L$ such that the transition map $( \ )+a:G\ra G$
and $a+( \ ):G\ra G$ are monotone for all $a\in G$.
Then we give some useful properties of an $L$-ordered group and show that
each transition maps $( \ )+a:G\ra G$ and $a+( \ ):G\ra G$ preserve both join and meet $S$
for all $S\in L^G$ (if join and meet
$S$ exist). Then the notion of a convex $L$ subgroup is defined and it is used to construct
quotient $L$-groups.
Finally, a relation between a positive cone of an $L$-ordered group and special type of
elements of $L^G$ is obtained.

\begin{defn}\label{4.1}
An {\it $L$-fuzzy ordered group} (an $L$-ordered group for short) is a group $(G;+,0)$
together with an $L$-order relation $e:G\times G\ra L$ on $G$ such that each translation maps
$( \ )+a:G\ra G$ and $a+( \ ):G\ra G$ are monotone, or equivalently: \\

${\rm(FOG)}$ for each $x,y,a,b\in G$, $e(x,y)\leq e(b+x+a,b+y+a)$. \\

We use $(G;e,+,0)$ to denote $G$ is an $L$-fuzzy ordered group. An {\it $L$-lattice ordered group} is the structure $(G;e,+,0)$,
where $(G;e,+,0)$ is an $L$-ordered group and  $(G;e)$ is an $L$-lattice.
In fact, an $L$-fuzzy ordered group $(G;e,+,0)$
is an $L$-lattice ordered group if for each $S\in L^{G}_{\bullet }$, $\sqcup S$ and $\sqcap S$ exist.
\end{defn}

In this section, after we find some results about $L$-ordered groups, we will propose a procedure to construct $L$-ordered groups (see Remark \ref{rmk}).

\begin{rmk}\label{4.2}
(i) Let $G$ be a lattice ordered group. Consider the $L$-ordered set $(G;\chi_\leq)$.
It is easy to show that $L$-ordered set $(G;\chi_\leq)$ is a
$\{0,1\}$-lattice ordered group. Moreover, if $(G;e,+,0)$ is an $L$-lattice ordered group, then clearly, $(G;+,0,\leq_e)$ is a crisp one. Therefore, by \cite[Proposition 1.1.7]{anderson-feil}, if $(G;e,+,0)$ is an
$L$-ordered group, then it is torsion free.

(ii) Let $(P,e)$ be a $L$-ordered set and $A(P)$ be the set of all automorphisms of an $L$-ordered set $P$,
that is $f:P\ra P$ and $f^{-1}:P\ra P$ are monotone. Define $\hat{e}:A(P)\times A(P)\ra L$ by
$\hat{e}(f,g)=\wedge_{x\in P}e(f(x),g(x))$
for all $f,g\in A(P)$. Clearly, $(A(P);\hat{e})$ is an $L$-ordered set. Put $f,g,\alpha\in A(P)$.
Since $\alpha$ is monotone, then $e(f(x),g(x))\leq e(\alpha(f(x)),\alpha(g(x)))$ for all $x\in P$, hence
$\hat{e}(f,g)=\wedge_{x\in P}e(f(x),g(x))\leq \wedge_{x\in P}e(\alpha(f(x)),\alpha(g(x)))=\hat{e}(\alpha\circ f,\alpha\circ g)$.
Also, $\wedge_{x\in P}e(f(x),g(x))\leq \wedge_{x\in P}e(f(\alpha(x)),g(\alpha(x)))$ (since $Im(\alpha)\s P$), so
$(A(P);\hat{e},\circ,Id_P)$ is an $L$-ordered group.
\end{rmk}

In the next proposition, we will gather some useful properties of $L$-ordered groups.

\begin{prop}\label{4.3}
Let $(G;e,+,0)$ be an $L$-ordered group. Then the following conditions hold:
\begin{itemize}
\item[{\rm (i)}] For each $x,y,a,b\in G$, $e(x,y)=e(b+x+a,b+y+a)$.
\item[{\rm (ii)}] For each $x,y\in G$, $e(x,y)=e(-y,-x)$.
\item[{\rm (iii)}] If $S\in L^G$ such that $\sqcup S$ exists, then for each
$a\in G$, $a+\sqcup S=\sqcup (a+ S)$ and $\sqcup S+a=\sqcup (S+a)$.
\item[{\rm (iv)}] If $S\in L^G$ such that $\sqcap S$ exists, then for each $a\in G$,
$a+ \sqcap S=\sqcap (a+ S)$ and $\sqcap S+a=\sqcap (S+a)$.
\item[{\rm (v)}] If $S\in L^G$ such that $\sqcup S$ $(\sqcap S)$  exists, then $-(\sqcup S)=(\sqcap -S)$ $(-(\sqcap S)=(\sqcup -S))$.
\item[{\rm (vi)}] If $x,y,a\in G$ such that $x\leq y$, then $e(y,a)\leq e(x,a)$.
\item[{\rm (vii)}] For all $a\in G$, the maps $a\wedge -:G\ra G$ and $a\vee -:G\ra G$ are monotone.
\end{itemize}
\end{prop}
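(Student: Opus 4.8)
The plan is to prove the seven items more or less in the stated order, since each relies on the previous ones. For (i), the key observation is that (FOG) gives $e(x,y)\le e(b+x+a,b+y+a)$, but applying (FOG) again to the elements $b+x+a$ and $b+y+a$ with the translating elements $-b$ on the left and $-a$ on the right yields $e(b+x+a,b+y+a)\le e(x,y)$; combining the two inequalities gives equality. Part (ii) follows from (i): write $e(-y,-x)$ and translate on the left by $y$ and on the right by $x$ to get $e(-y,-x)=e(y+(-y)+x,\,y+(-x)+x)=e(x,y)$ (using group axioms to simplify $y-y+x=x$ and $y-x+x=y$).

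For (iii) and (iv) I would invoke Theorem~\ref{adjoint}, or argue directly from Theorem~\ref{join and meet}. The direct route: suppose $x_0=\sqcup S$, so $e(x_0,x)=\wedge_{y}(S(y)\ra e(y,x))$ for all $x$. I want to show $a+x_0=\sqcup(a+S)$, i.e. $e(a+x_0,x)=\wedge_{z}((a+S)(z)\ra e(z,x))$. Unravelling $(a+S)(z)=\vee\{S(y)\mid a+y=z\}$ and using the Heyting law $(\vee_i u_i)\ra v=\wedge_i(u_i\ra v)$ together with property (iii) of Heyting algebras, the right-hand side becomes $\wedge_{y}(S(y)\ra e(a+y,x))$. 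By part (i), $e(a+y,x)=e(y,-a+x)$, so this equals $\wedge_{y}(S(y)\ra e(y,-a+x))=e(x_0,-a+x)=e(a+x_0,x)$, again by (i). The argument for right translation, and the dual argument for $\sqcap$ in (iv), are identical in form. Part (v) is then immediate by composing with the order-reversing bijection $-(\ )$: from (ii), $e(-y,-x)=e(x,y)$ shows $-(\ )$ sends the $L$-order to its opposite, so it interchanges the defining conditions (J1),(J2) with (M1),(M2); hence $-( \ )$ carries $\sqcup S$ to $\sqcap(-S)$ and vice versa. One should check $(-S)(y)=S(-y)$ makes the bookkeeping work, but this is routine.

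For (vi): if $x\le_e y$ then $e(x,y)=1$, and by (E2), $e(y,a)=e(x,y)\wedge e(y,a)\le e(x,a)$ — this is exactly the computation already done in Remark~\ref{3.4}(iv), just applied to the crisp relation. Finally (vii): to show $a\wedge -$ is monotone I must show $e(x,y)\le e(a\wedge x,\,a\wedge y)$. Here I would use Proposition~\ref{3.4.2}(i): $e(a\wedge x,\,a\wedge y)=e(a\wedge x,a)\wedge e(a\wedge x,y)$. The first factor is $1$ since $a\wedge x\le_e a$. For the second factor, $e(a\wedge x,y)\ge e(a\wedge x,x)\wedge e(x,y)=1\wedge e(x,y)=e(x,y)$ by (E2) and $a\wedge x\le_e x$; dually $a\vee-$ uses Proposition~\ref{3.4.2}(ii). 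The main obstacle is part (iii)/(iv): one must be careful that the reindexing of the supremum defining $a+S$ interacts correctly with the Heyting implication, and that the translation identity from (i) is invoked in the right direction; everything else is a short application of an already-established lemma.
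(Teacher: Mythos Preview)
Your proposal is correct and, for parts (i)--(iv) and (vii), essentially identical to the paper's argument. For (v) and (vi) you take slightly more direct routes than the paper: in (v) the paper does an explicit computation with Theorem~\ref{join and meet} (starting from $m=\sqcap S$ and verifying $e(-m,u)=\wedge_{y}((-S)(y)\ra e(y,u))$ by substituting $u=-x$ and using (ii)) rather than your higher-level observation that $-(\ )$ interchanges the (J) and (M) conditions; and in (vi) the paper first uses (ii) to convert $x\le_e y$ into $-y\le_e -x$, then applies Remark~\ref{3.4}(iv) (monotonicity of $e$ in its \emph{second} argument) to $e(-a,-)$, and finally uses (ii) once more---your single application of (E2) is shorter and avoids the detour. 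One small slip: your parenthetical ``this is exactly the computation already done in Remark~\ref{3.4}(iv)'' is not quite right, since that remark proves monotonicity in the second slot of $e$, not antitonicity in the first; but your own (E2) argument for (vi) is self-contained and correct regardless.
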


\begin{proof}
(i) For all $x,y,a\in G$, by ${\rm(FOG)}$ we have $e(x,y)\leq e(b+x+a,b+y+a)$ and $e(b+x+a,b+y+a)\leq e(-b+(b+x+a)-a,-b+(b+y+a)-a)=e(x,y)$. So, $e(x,y)=e(b+x+a,b+y+a)$.

(ii) Let $x,y\in G$. Then by (i), $e(x,y)=e(x-x,y-x)=e(0,y-x)=e(-y,-y+y-x)=e(-y,-x)$.

(iii) Let $S\in L^G$, $\sqcup S=J$ and $a\in G$.  We show that $a+ J=\sqcup (a+ S)$.
By Theorem \ref{join and meet}(i), it
suffices to show that $e(a+J,x)=\wedge_{y\in G} ((a+ S)(y)\ra e(y,x))$ for all $x\in G$. Take $x\in G$.
\begin{eqnarray*}
\wedge_{y\in P} ((a+S)(y)\ra e(y,x))&=& \wedge_{y\in G}[(\vee_{\{t\in G| \ a+ t=y\}} S(t))\ra e(y,x)]\\
&=& \wedge_{y\in G}\wedge_{\{t\in G| \ a+t=y\}} (S(t)\ra e(y,x))\\
&=& \wedge_{t\in G} (S(t)\ra e(a+ t,x))\\
&=& \wedge_{t\in G} (S(t)\ra e(-a+a+t,-a+x)), \mbox{ by (i),}\\
&=& \wedge_{t\in G} (S(t)\ra e(t,-a+x)).
\end{eqnarray*}
Since $\sqcup S=J$, then by Theorem \ref{join and meet}(i),
$e(J,-a+x)=\wedge_{y\in G} (S(y)\ra e(y,-a+x))$. Hence by (i),
we obtain that $\wedge_{y\in P} ((a+S)(y)\ra e(y,x))=e(J,-a+x)=e(a+J,x)$. Therefore, $a+ J=\sqcup (a+ S)$.
The proof of the other part is similar.

(iv) The proof of this part is similar to (iii).

(v) Let $m=\sqcap S$. Then for each $x\in G$, $e(x,m)=\wedge_{y\in G} (S(y)\ra e(x,y))$. For all $x\in G$, by (ii),
$$e(-m,-x)=\wedge_{y\in G} (S(y)\ra e(x,y))=\wedge_{y\in G} (S(y)\ra e(-y,-x)).$$
So, for all $u\in G$, we have $e(-m,u)=\wedge_{y\in G} (S(y)\ra e(-y,u))$. It follows that,
$e(-m,u)=\wedge_{y\in G} (S(-y)\ra e(y,u))=\wedge_{y\in G} ((-S)(y)\ra e(y,u))$ for all $u\in G$.
Therefore, by Theorem \ref{join and meet}(ii),
we conclude that $(\sqcap -S)=-m=-(\sqcap S)$. The proof of the rest is similar.

(vi) Let $x,y,a\in G$ and $x\leq_e y$. Then $-y\leq_e -x$, so by Remark \ref{3.4}(iv), $e(-a,-y)\leq e(-a,-x)$.
Hence by (ii), we get that $e(y,a)\leq e(x,a)$.

(vii) By Proposition \ref{3.4.2}(iii),
$e(a\wedge x,y)\leq e(a\wedge x, a\wedge y)$ for all $x,y,a\in G$.
On the other hand, $a\wedge x\leq x$ and so by (vi),
$e(x,y)\leq e(a\wedge x,y)$, which implies that $e(x,y)\leq e(a\wedge x,a\wedge y)$. Therefore, $a\wedge -$ is monotone.
In a similar way, we can show that  $a\vee -$ is monotone.
\end{proof}

Proposition \ref{4.3}(v) entails the following corollary:

\begin{cor}\label{lattice}
An $L$-ordered group $(G;e,+,0)$ is an $L$-lattice ordered group if and only if
$\sqcup S$ exists for any $S\in L^{G}_{\bullet }$ or equivalently,
$\sqcap S$ exists for any $S\in L^{G}_{\bullet }$.
\end{cor}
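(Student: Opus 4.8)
The plan is to reduce everything to Proposition~\ref{4.3}(v), which converts the join of a finitely supported fuzzy set into the meet of its negative and conversely. First I would dispose of the trivial implications: if $(G;e,+,0)$ is an $L$-lattice ordered group then, by Definition~\ref{4.1}, both $\sqcup S$ and $\sqcap S$ exist for every $S\in L^{G}_{\bullet}$, so in particular each of the two one-sided existence conditions holds.

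For the substantial direction, assume that $\sqcup S$ exists for every $S\in L^{G}_{\bullet}$, and fix an arbitrary $T\in L^{G}_{\bullet}$; I must exhibit $\sqcap T$. Consider $-T\in L^{G}$ given by $(-T)(x)=T(-x)$. Since $supp(-T)=\{-x\mid x\in supp(T)\}$ is finite, $-T\in L^{G}_{\bullet}$, so by hypothesis $\sqcup(-T)$ exists. Applying Proposition~\ref{4.3}(v) to the element $-T$ gives $\sqcap\big(-(-T)\big)=-\big(\sqcup(-T)\big)$, and since $-(-T)=T$ this says precisely that $\sqcap T$ exists and equals $-(\sqcup(-T))$. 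Hence the join and the meet of every finitely supported fuzzy set exist, i.e. $(G;e,+,0)$ is an $L$-lattice ordered group.

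The ``or equivalently'' clause is handled by the mirror-image argument: if $\sqcap S$ exists for every $S\in L^{G}_{\bullet}$, then for any $T\in L^{G}_{\bullet}$ we have $-T\in L^{G}_{\bullet}$, so $\sqcap(-T)$ exists, and the other half of Proposition~\ref{4.3}(v) yields $\sqcup T=-(\sqcap(-T))$. Thus the statements ``$\sqcup S$ exists for all $S\in L^{G}_{\bullet}$'' and ``$\sqcap S$ exists for all $S\in L^{G}_{\bullet}$'' are equivalent, and either one is equivalent to $(G;e,+,0)$ being an $L$-lattice ordered group.

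I do not anticipate any genuine obstacle here; the sole point needing a word of care is the observation that $-T$ again lies in $L^{G}_{\bullet}$, since finiteness of the support is preserved under $x\mapsto -x$. Once that is noted, Proposition~\ref{4.3}(v) does all the work.
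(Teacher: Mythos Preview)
Your proposal is correct and follows exactly the approach the paper intends: the paper simply states that ``Proposition~\ref{4.3}(v) entails the following corollary'' without further detail, and you have spelled out precisely how that entailment works, including the observation that $-T\in L^{G}_{\bullet}$ whenever $T\in L^{G}_{\bullet}$. There is nothing to add.
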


\begin{cor}\label{4.4}
Suppose that $(G;e,+,0)$ is an $L$-lattice ordered group, then for any $S\in L^{G}_{\bullet }$ and $a\in G$, we have
\begin{itemize}
\item[{\em (i)}] $a+ \sqcup S=\sqcup (a+ S)$, $a+ \sqcap S=\sqcap (a+ S)$ $-(\sqcup S)=(\sqcap -S)$ and $-(\sqcap S)=(\sqcup -S)$.
\item[{\em (ii)}] If $(G;e)$ is complete, then for each $a\in G$, the transition map $( \ )+a:G\ra G$ has both left and right adjoint.
\end{itemize}
\end{cor}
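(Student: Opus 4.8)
The plan is to derive both items almost directly from Proposition \ref{4.3} and Corollary \ref{lattice}, since $(G;e,+,0)$ being an $L$-lattice ordered group means precisely that $\sqcup S$ and $\sqcap S$ exist for every $S\in L^{G}_{\bullet}$. For item (i), I would fix $S\in L^{G}_{\bullet}$ and $a\in G$, observe that $\sqcup S$ and $\sqcap S$ exist by Definition \ref{4.1} (equivalently by Corollary \ref{lattice}), and then simply invoke Proposition \ref{4.3}(iii) to get $a+\sqcup S=\sqcup(a+S)$, Proposition \ref{4.3}(iv) to get $a+\sqcap S=\sqcap(a+S)$, and Proposition \ref{4.3}(v) to get $-(\sqcup S)=\sqcap(-S)$ and $-(\sqcap S)=\sqcup(-S)$. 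The only small point to check is that $a+S$ and $-S$ again lie in $L^{G}_{\bullet}$ so that the join/meet on the right-hand side are guaranteed to exist; this is immediate because translation by $a$ and negation are bijections of $G$, so $supp(a+S)=a+supp(S)$ and $supp(-S)=-supp(S)$ are finite.

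For item (ii), assume in addition that $(G;e)$ is complete. I want to show the translation $(\ )+a:G\ra G$ has both a left and a right fuzzy adjoint. First note this map is monotone by Definition \ref{4.1}. To get the right adjoint, I would apply Theorem \ref{adjoint}(i): since $G$ is complete, $(\ )+a$ has a fuzzy right adjoint if and only if $(\sqcup S)+a=\sqcup f^{\ra}(S)$ for all $S\in L^{G}$, where $f=(\ )+a$. By Proposition \ref{4.3}(iii) we indeed have $(\sqcup S)+a=\sqcup(S+a)$ for every $S\in L^{G}$ for which $\sqcup S$ exists — and by completeness this is every $S$ — so it remains only to identify $f^{\ra}(S)$ with $S+a$. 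This is a direct unravelling of the definition of $f^{\ra}$: $f^{\ra}(S)(y)=\bigvee_{t+a=y}S(t)=S(y-a)$, which is exactly $(S+a)(y)$ in the notation of Definition \ref{L-subgroup}. Dually, to obtain the left adjoint I would use Theorem \ref{adjoint}(ii) with $g=(\ )+a$: since $G$ is complete, $g$ has a fuzzy left adjoint iff $(\sqcap S)+a=\sqcap g^{\ra}(S)$ for all $S\in L^{G}$, and this follows from Proposition \ref{4.3}(iv) together with the same identification $g^{\ra}(S)=S+a$.

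I expect no serious obstacle here; the corollary is essentially a repackaging of results already proved. The one place demanding a little care is the bookkeeping in item (ii): one must be careful to apply Theorem \ref{adjoint} in the correct direction (part (i) gives a right adjoint from preservation of joins, part (ii) gives a left adjoint from preservation of meets) and to verify that the image operator $f^{\ra}$ really coincides with the translation $S\mapsto S+a$ on all of $L^{G}$, not merely on $L^{G}_{\bullet}$. Since translation by $a$ is a bijection, the supremum $\bigvee_{f(x)=y}S(x)$ degenerates to the single term $S(y-a)$, so this identification is exact and the hypotheses of Theorem \ref{adjoint} are met. An entirely analogous argument handles the left translation $a+(\ )$ if one wishes to state the result symmetrically.
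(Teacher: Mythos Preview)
Your proposal is correct and follows essentially the same route as the paper: part (i) is obtained directly from Proposition \ref{4.3}(iii)--(v), and part (ii) from Proposition \ref{4.3}(iii), (iv) together with Theorem \ref{adjoint}. Your write-up is in fact more careful than the paper's, which omits the verification that $f^{\ra}(S)=S+a$ and that $a+S,\,-S\in L^{G}_{\bullet}$.
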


\begin{proof}\label{4.5}
(i) It follows from Proposition \ref{4.3}(iii)--(v).

(ii) It follows from Proposition \ref{4.3}(iii), (iv) and Theorem \ref{adjoint}.
\end{proof}

\begin{prop}\label{4.10}
Let $(G;e,+,0)$ be an $L$-lattice ordered group and $S,T\in L^{G}_{\bullet }$. Then $\sqcup (S+T)=\sqcup S+\sqcup T$ and
$\sqcap (S+T)=\sqcap S+\sqcap T$.
\end{prop}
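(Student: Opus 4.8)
The plan is to prove the supremum identity $\sqcup(S+T)=\sqcup S+\sqcup T$ by a single direct computation with Theorem~\ref{join and meet}(i), and then to deduce the infimum identity from it by applying the order-reversing group automorphism $x\mapsto -x$ together with Proposition~\ref{4.3}(v).

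First I would record that $S+T\in L^{G}_{\bullet}$, since $supp(S+T)\s\{x_1+x_2\mid x_1\in supp(S),\ x_2\in supp(T)\}$ is finite; hence $\sqcup(S+T)$ and $\sqcap(S+T)$ exist because $(G;e,+,0)$ is an $L$-lattice ordered group. Write $J_1=\sqcup S$ and $J_2=\sqcup T$. Now fix $x\in G$. Distributing $\ra$ over the join defining $(S+T)(y)$ via the Heyting identity (iii), and then uncurrying with (i) and (ii), one obtains
\[
\wedge_{y\in G}\big((S+T)(y)\ra e(y,x)\big)=\wedge_{x_1\in G}\Big(S(x_1)\ra \wedge_{x_2\in G}\big(T(x_2)\ra e(x_1+x_2,x)\big)\Big).
\]
By Proposition~\ref{4.3}(i) we have $e(x_1+x_2,x)=e(x_2,-x_1+x)$, so the inner meet equals $\wedge_{x_2\in G}\big(T(x_2)\ra e(x_2,-x_1+x)\big)=e(J_2,-x_1+x)$ by Theorem~\ref{join and meet}(i), and applying Proposition~\ref{4.3}(i) once more this is $e(x_1+J_2,x)$. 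Thus the right-hand side above is $\wedge_{x_1\in G}\big(S(x_1)\ra e(x_1+J_2,x)\big)$. On the other hand, by Proposition~\ref{4.3}(iii), $\sqcup(S+J_2)=J_1+J_2$, and since $S+J_2$ is just the translate $z\mapsto S(z-J_2)$, Theorem~\ref{join and meet}(i) together with the substitution $z=x_1+J_2$ gives $e(J_1+J_2,x)=\wedge_{x_1\in G}\big(S(x_1)\ra e(x_1+J_2,x)\big)$. Comparing the two expressions yields $e(\sqcup S+\sqcup T,x)=\wedge_{y\in G}\big((S+T)(y)\ra e(y,x)\big)$ for every $x\in G$, so $\sqcup(S+T)=\sqcup S+\sqcup T$ by Theorem~\ref{join and meet}(i).

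For the infimum identity I would observe that $-(S+T)=(-T)+(-S)$ — the summands are interchanged because $-(x_1+x_2)=-x_2-x_1$ when $G$ need not be abelian — and that this function again lies in $L^{G}_{\bullet}$. Then, using Proposition~\ref{4.3}(v) and the supremum identity just proved,
\[
\sqcap(S+T)=-\sqcup\big((-T)+(-S)\big)=-\big(\sqcup(-T)+\sqcup(-S)\big)=-\big((-\sqcap T)+(-\sqcap S)\big)=\sqcap S+\sqcap T,
\]
where the last equality is again $-(p+q)=-q-p$. (Alternatively, one can repeat the computation of the second paragraph verbatim with meets replacing joins and Theorem~\ref{join and meet}(ii) replacing (i).)

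The main obstacle here is purely bookkeeping rather than conceptual: one must keep the non-commutativity straight — the translate $x_1+J_2$ must stay on the correct side throughout, and the negation reverses the order of the summands — and one must invoke the Heyting-algebra identities (i)--(iii) in the right order when converting the double supremum hidden in $(S+T)$ into an iterated implication. Once these manipulations are arranged carefully, everything follows from Proposition~\ref{4.3}(i),(iii),(v) and Theorem~\ref{join and meet}.
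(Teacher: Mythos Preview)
Your proof is correct and follows essentially the same direct computation as the paper for the join identity: expand $(S+T)(y)$, distribute $\ra$ over the defining supremum, and reduce the inner and outer meets via Theorem~\ref{join and meet}(i) and Proposition~\ref{4.3}(i). The only cosmetic difference is that at the last step you invoke Proposition~\ref{4.3}(iii) on the translate $S+J_2$, whereas the paper translates $e(x_1+J_2,x)$ back to $e(x_1,x-J_2)$ and applies $\sqcup S=J_1$ directly; these are equivalent one-line manipulations. For the meet identity the paper simply says ``by a similar way'', i.e.\ repeat the computation, which is exactly the alternative you mention in parentheses; your primary route via $-(S+T)=(-T)+(-S)$ and Proposition~\ref{4.3}(v) is a correct and slightly slicker variant.
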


\begin{proof}
Suppose that $j_1=\sqcup S$ and $j_2=\sqcup T$. Then by Proposition \ref{join and meet}, for each $x\in G$, $e(j_1,x)=\wedge_{y\in G}(S(y)\ra e(y,x))$ and
$e(j_2,x)=\wedge_{y\in G}(T(y)\ra e(y,x))$. Take $x\in G$.
\begin{eqnarray*}
\wedge_{y\in G}((S+T)(y)\ra e(y,x))&=& \wedge_{y\in G}((\vee_{y_1+y_2=y}(S(y_1)\wedge T(y_2)))\ra e(y,x))\\
&=& \wedge_{y\in G}\wedge_{y_1+y_2=y}((S(y_1)\wedge T(y_2))\ra e(y,x))\\
&=& \wedge_{y\in G}\wedge_{y_1+y_2=y}(S(y_1)\ra (T(y_2)\ra e(y,x)))\\
&=& \wedge_{y_1,y_2\in G}(S(y_1)\ra (T(y_2)\ra e(y_1+y_2,x)))\\
&=& \wedge_{y_1\in G}\wedge_{y_2\in G}(S(y_1)\ra (T(y_2)\ra e(y_2,-y_2+x)))\\
&=& \wedge_{y_1\in G}(S(y_1)\ra (\wedge_{y_2\in G}(T(y_2)\ra e(y_2,-y_2+x))))\\
&=& \wedge_{y_1\in G}(S(y_1)\ra e(j_2,-y_1+x))\\
&=& \wedge_{y_1\in G}(S(y_1)\ra e(y_1,x-j_2))\\
&=&e(j_1,x-j_2)=e(j_1+j_2,x).
\end{eqnarray*}
Hence $\sqcup (S+T)=\sqcup S+\sqcup T$. By a similar way, we can show that $\sqcap (S+T)=\sqcap S+\sqcap T$.
\end{proof}

\begin{defn}\label{convex}
Let $(P;e)$ be an $L$-ordered set. An element $S\in L^P$ is called {\it convex} if for every $x,y,a\in P$,
$S(a)\geq S(x)\wedge S(y)\wedge e(x,a)\wedge e(a,y)$.
\end{defn}

\begin{exm}\label{make-convex}
Let $(P;e)$ be an $L$-ordered set and $S\in L^P$. Define $\overline{S}:P\ra L$ by
$\overline{S}(a)=\vee_{x,y\in P}(S(x)\wedge S(y)\wedge e(x,a)\wedge e(a,y))$. We claim that $\overline{S}$ is convex. In fact, for each
$x,y,a\in G$, $\overline{S}(x)=\vee_{t_1,t_2\in G}(S(t_1)\wedge S(t_2)\wedge e(t_1,x)\wedge e(x,t_2))$ and
$\overline{S}(y)=\vee_{u_1,u_2\in G}(S(u_1)\wedge S(u_2)\wedge e(u_1,y)\wedge e(y,u_2))$, so, by
\begin{eqnarray*}
\overline{S}(x)\!\!\!\!&\wedge&\!\!\!\! \overline{S}(y)\wedge e(x,a)\wedge e(a,y)\\
\!\!\!\!&=&\!\!\!\! \vee_{t_1,t_2,u_1,u_2\in P}
(S(t_1)\!\wedge\! S(t_2)\!\wedge\! e(t_1,x)\!\wedge\! e(x,t_2)\!\wedge\! S(u_1)\!\wedge\! S(u_2)\!\wedge\! e(u_1,y)\!\wedge\! e(y,u_2)\!\wedge\! e(x,a)\!\wedge\! e(a,y))\\
\!\!\!\!&\leq&\!\!\!\! \vee_{t_1,t_2,u_1,u_2\in P}(S(t_1)\wedge S(t_2)\wedge e(t_1,a)\wedge e(x,t_2)\wedge S(u_1)\wedge S(u_2)\wedge e(u_1,y)\wedge e(a,u_2))\\
\!\!\!\!&\leq&\!\!\!\! \vee_{t_1,t_2,u_1,u_2\in P}(S(t_1)\wedge S(u_2)\wedge e(t_1,a)\wedge e(a,u_2))\\
\!\!\!\!&=&\!\!\!\! \overline{S}(a).
\end{eqnarray*}
we get that $\overline{S}$ is convex. Moreover, it is clear that if $T\in L^P$ is convex such that $S(x)\leq T(x)$, for all
$x\in P$, then $\overline{S}\s T$.
\end{exm}

Note that if $(P;e)$ is an $L$-ordered set, then $S\in L^G$ is convex if and only if
$S(a)=\vee_{x\in P}\vee_{y\in P} [S(x)\wedge S(y)\wedge e(x,a)\wedge e(a,y)]$ for all $a\in G$. First, we assume that $S$ is convex and
$a\in P$, then $S(a)$ is an upper bound for the set $\{S(x)\wedge S(y)\wedge e(x,a)\wedge e(a,y)|\ x,y\in P\}$. Also,
$S(a)=S(a)\wedge S(a)\wedge e(a,a)\wedge e(a,a)$ is an element of this set, so
$S(a)=\vee_{x\in P}\vee_{y\in P} [S(x)\wedge S(y)\wedge e(x,a)\wedge e(a,y)]$. The proof of the converse is clear.

\begin{prop}\label{L-subgroup prop}
Let $S$ be a normal $L$-subgroup of an $L$-ordered group $(G;e,+,0)$. Then
$S$ is convex if and only if $S(a)\geq S(x)\wedge e(0,a)\wedge e(a,x)$ for all $x,a\in G$.
\end{prop}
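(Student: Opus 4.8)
The plan is to prove the equivalence in both directions, with the forward implication being essentially trivial and the reverse being the substantive part. First I would observe that the forward direction is immediate: if $S$ is convex, then by Definition \ref{convex} applied with the element $0$ in place of one of the two bounding variables, we get $S(a)\geq S(0)\wedge S(x)\wedge e(0,a)\wedge e(a,x)$; and since $S$ is a normal $L$-subgroup, Proposition \ref{Quo}(i) gives $S(x)\leq S(0)$ for all $x$, so $S(x)\wedge S(0)=S(x)$ and the desired inequality $S(a)\geq S(x)\wedge e(0,a)\wedge e(a,x)$ drops out.

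For the reverse direction, assume $S(a)\geq S(x)\wedge e(0,a)\wedge e(a,x)$ for all $x,a\in G$, and fix arbitrary $x,y,a\in G$; the goal is $S(a)\geq S(x)\wedge S(y)\wedge e(x,a)\wedge e(a,y)$. The key idea is translation invariance: by Proposition \ref{4.3}(i) we have $e(x,a)=e(0,a-x)$ and $e(a,y)=e(a-x,y-x)$, so after translating by $-x$ it is natural to work with the element $a-x$ and to bring in the hypothesis at the translated point. Concretely, I would set $b=a-x$, note $e(0,b)=e(x,a)$, and then use $e(a,y)=e(x,a)\wedge e(a,y)$'s relation to $e(0, y-x)$ — more precisely, combining $e(0,b)=e(x,a)$ with $e(b,y-x)=e(a,y)$, the hypothesis (applied at the point $b$ with witness $y-x$, i.e. $S(b)\geq S(y-x)\wedge e(0,b)\wedge e(b,y-x)$) yields $S(b)\geq S(y-x)\wedge e(x,a)\wedge e(a,y)$. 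Since $S$ is an $L$-subgroup, $S(y-x)\geq S(y)\wedge S(-x)=S(y)\wedge S(x)$, so $S(b)=S(a-x)\geq S(x)\wedge S(y)\wedge e(x,a)\wedge e(a,y)$. Finally, using normality of $S$ together with Proposition \ref{Quo} — specifically $S(a-x)=S(-x+a)$ and the $L$-subgroup property $S(a)=S(x+(-x+a))\geq S(x)\wedge S(-x+a)$ — I would absorb the extra factor $S(x)$ (which is already present on the right-hand side) to conclude $S(a)\geq S(x)\wedge S(y)\wedge e(x,a)\wedge e(a,y)$, as required.

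The main obstacle I anticipate is bookkeeping the translations correctly so that the single-variable hypothesis at a shifted base point reassembles into the two-variable convexity inequality at $a$; in particular one must be careful that the hypothesis is stated with $0$ as the \emph{lower} bound and $x$ as the \emph{upper} bound, so the roles of $e(0,a)$ and $e(a,x)$ are not symmetric, and the translation must be chosen to send the lower witness to $0$. A secondary point to handle cleanly is the passage from $S(a-x)$ back to $S(a)$, which genuinely uses both the subgroup axiom and normality (via $S(a-x)=S(-x+a)$) rather than convexity, so it should be invoked explicitly. Once these are set up, the remaining manipulations are routine applications of (E2), Proposition \ref{4.3}(i)--(ii), and the defining properties of a normal $L$-subgroup.
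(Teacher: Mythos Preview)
Your proposal is correct and follows essentially the same route as the paper: translate by $-x$, apply the one-variable hypothesis at $a-x$ with upper witness $y-x$, use Proposition \ref{4.3}(i) to rewrite $e(0,a-x)=e(x,a)$ and $e(a-x,y-x)=e(a,y)$, bound $S(y-x)\geq S(x)\wedge S(y)$, and then climb back from $S(a-x)$ to $S(a)$ via the $L$-subgroup axiom. The only minor difference is that in the last step the paper simply writes $S(a)=S((a-x)+x)\geq S(a-x)\wedge S(x)$, whereas you detour through normality to get $S(a-x)=S(-x+a)$ before applying the subgroup inequality; your version works, but normality is not actually needed at that point.
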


\begin{proof}
Let $S(a)\geq S(x)\wedge e(0,a)\wedge e(a,x)$ for all $x,a\in G$. Put $x,y,a\in G$. By the assumption,
$S(a-x)\geq S(y-x)\wedge e(0,a-x)\wedge e(a-x,y-x)$. Since  $(G;e,+,0)$ is an $L$-ordered
group and $S$ is its $L$-subgroup, we get
$S(y-x)\geq S(y)\wedge S(x)$, $e(0,a-x)=e(x,a)$ and $e(a-x,y-x)=e(a,y)$, so
$S(a)=S(a-x+x)\geq S(a-x)\wedge S(x)\geq
S(y-x)\wedge e(0,a-x)\wedge e(a-x,y-x)\wedge S(x)
\geq S(x)\wedge S(y)\wedge e(x,a)\wedge e(a,y)$. Therefore,
$S$ is convex. The proof of the converse is clear.
\end{proof}

\begin{defn}
Let $(G;e,+,0)$ be an $L$-ordered group and $S\in L^G$. By a {\it positive cone} of $S$, we mean a map
$S^+:G\ra L$ defined by $S^+(x)=S(x)\wedge e(0,x)$ for all $x\in G$. The positive cone of $G$ is the positive
cone of the map $S_G:G\ra L$ sending each element $x\in G$ to $e(0,x)$.
So, for each $x\in G$, $(S_G)^{+}(x)=e(0,x)\wedge e(0,x)=e(0,x)$.
\end{defn}

Let $(G;e,+,0)$ be an $L$-ordered group. Then there exists an interesting relation between the positive cone of
$S$ and $\downarrow S$ for an $L$-subset $S\in L^G$.

Assume that $S$ is a convex $L$-subgroup of $(G;e,+,0)$ and $a\in G$. For each $x\in G$,
$S(x)\wedge e(0,a)\wedge e(a,x)=S(x)\wedge S(0)\wedge e(0,a)\wedge e(a,x)\leq S(a)$, so
$e(0,a)\wedge \downarrow S(a)=e(0,a)\wedge \vee_{x\in G}(S(x)\wedge e(a,x))=\vee_{x\in G}(S(x)\wedge e(0,a)\wedge e(a,x)) \leq S(a)$.
It is easy to see that $e(0,a)\wedge \vee_{x\in G}(S(x)\wedge e(a,x))=e(0,a)\wedge S(a)$, hence
$(\downarrow S)^{+}(a)=e(0,a)\wedge \downarrow S(a)=S(a)\wedge e(0,a)=S^{+}(a)$, whence
$(\downarrow S)^{+}=S^{+}$. Moreover, if $T$ is an $L$-subgroup of $(G;e,+,0)$ such that $(\downarrow T)^{+}=T^{+}$,
then for each $a\in G$, $\vee_{x\in G}(T(x)\wedge e(a,x)\wedge e(0,a))\leq T(a)\wedge e(0,a)\leq T(a)$, and so
by Proposition \ref{L-subgroup prop}, $T$ is convex.

\begin{prop}\label{cone}
Let $(G;e,+,0)$ and $(H;e,+,0)$ be two $L$-ordered groups and $f:G\ra H$ be a group homomorphism.
Then the following are equivalent:
\begin{itemize}
\item[{\rm (i)}] $e(0,x)\leq e(0,f(x))$ for all $x\in G$.
\item[{\rm (ii)}] $f$ is monotone.
\item[{\rm (iii)}] $f(S_G)\s S_H$ (that is $f(S_G)(y)\leq S_H(y)$ for all $y\in H$).
\end{itemize}
\end{prop}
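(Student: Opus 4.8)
The plan is to prove the cycle of implications (i) $\Rightarrow$ (ii) $\Rightarrow$ (iii) $\Rightarrow$ (i), using Proposition \ref{4.3}(i) to reduce everything to comparisons with $0$, and the definition of the image map $f^{\ra}$ together with the fact that $f$ is a group homomorphism.

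First I would prove (i) $\Rightarrow$ (ii). Let $x,y\in G$. By Proposition \ref{4.3}(i) applied in $G$ we have $e(x,y)=e(0,-x+y)$, and applied in $H$ we have $e(f(x),f(y))=e(0,-f(x)+f(y))=e(0,f(-x+y))$, the last step because $f$ is a homomorphism. Now (i) applied to the element $-x+y$ gives $e(0,-x+y)\leq e(0,f(-x+y))$, so $e(x,y)\leq e(f(x),f(y))$, i.e.\ $f$ is monotone.

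Next, (ii) $\Rightarrow$ (iii). Fix $y\in H$. By definition $f(S_G)(y)=\vee_{f(x)=y}S_G(x)=\vee_{f(x)=y}e_G(0,x)$, while $S_H(y)=e_H(0,y)$. For each $x$ with $f(x)=y$, monotonicity of $f$ gives $e_G(0,x)\leq e_H(f(0),f(x))=e_H(0,y)$, using $f(0)=0$. Hence every term of the supremum is $\leq e_H(0,y)$, so $f(S_G)(y)\leq S_H(y)$; since $y$ was arbitrary, $f(S_G)\s S_H$. Finally, (iii) $\Rightarrow$ (i): given $x\in G$, taking $y=f(x)$ in the inequality $f(S_G)(y)\leq S_H(y)$ we get $e_G(0,x)\leq f(S_G)(f(x))=\vee_{f(x')=f(x)}e_G(0,x')\leq S_H(f(x))=e_H(0,f(x))$, which is exactly (i).

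None of the three implications presents a real obstacle; the only points that require care are keeping the two distinct order relations $e_G$ and $e_H$ straight (the excerpt overloads the symbol $e$ for both), and invoking $f(0)=0$ and the homomorphism property $f(-x+y)=-f(x)+f(y)$ at the right moments so that Proposition \ref{4.3}(i) can be applied on the $H$ side. If one wanted, the equivalence (i) $\Leftrightarrow$ (ii) could also be phrased as saying that monotonicity of a group homomorphism between $L$-ordered groups is detected on the positive cone, which is the conceptual content worth emphasising in a remark afterwards.
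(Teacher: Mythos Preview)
Your proof is correct and follows essentially the same cycle (i) $\Rightarrow$ (ii) $\Rightarrow$ (iii) $\Rightarrow$ (i) as the paper, with the same use of Proposition \ref{4.3}(i), the homomorphism property, and the definition of $f^{\ra}$. The only cosmetic difference is that the paper translates on the right (writing $e(x,y)=e(0,y-x)$) whereas you translate on the left (writing $e(x,y)=e(0,-x+y)$), but Proposition \ref{4.3}(i) justifies either choice.
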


\begin{proof}
${\rm (i)\Rightarrow (ii)}$  Put $x,y\in G$. By (i) and Proposition \ref{3.4}(i), we have
$e(x,y)=e(0,y-x)\leq e(0,f(y-x))=e(0,f(y)-f(x))=e(f(x),f(y))$. Hence, $f$ is monotone.

${\rm (ii)\Rightarrow (iii)}$ Let $y\in H$. Then $f(S_G)(y)=\vee_{\{x\in G|\ f(x)=y\}}S_G(x)=\vee_{\{x\in G|\ f(x)=y\}}e(0,x)$. Since $f$ is monotone, then
$e(0,x)\leq e(0,f(x))=e(0,y)$ for all $x\in \{x\in G|\ f(x)=y\}$. It follows that $\vee_{\{x\in G|\ f(x)=y\}}S_G(x)\leq e(0,y)=S_H(y)$.

${\rm (iii)\Rightarrow (i)}$ For any $x\in G$, $e(0,x)\leq \vee_{\{a\in G|\ f(a)=f(x)\}}e(0,a)=f(S_G)(f(x))\leq S_H(f(x))=e(0,f(x))$.
\end{proof}

\begin{prop}\label{convex-subgroup}
Let $S$ be a normal $L$-subgroup of an $L$-ordered group $(G;e,+,0)$.
Then $\overline{S}$ is a normal convex $L$-subgroup.
\end{prop}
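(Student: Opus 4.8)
The plan is to verify that $\overline{S}$ meets each clause of the definition of a normal convex $L$-subgroup in turn: convexity, stability under $+$ and $-$, and normality. Convexity is already settled --- it is exactly the content of Example \ref{make-convex}, which shows $\overline{S}$ is convex for \emph{every} $S\in L^G$. Thus the whole work lies in checking that $\overline{S}$ is a normal $L$-subgroup, i.e.\ that for all $a,u,v\in G$ one has (a) $\overline{S}(-a)=\overline{S}(a)$, (b) $\overline{S}(u)\wedge\overline{S}(v)\le\overline{S}(u+v)$, and (c) $\overline{S}(v)\le\overline{S}(u+v-u)$. All three come from the same mechanism: expand the relevant $\overline{S}$-value as a join of ``sandwich'' terms $S(t_1)\wedge S(t_2)\wedge e(t_1,\cdot)\wedge e(\cdot,t_2)$ and bound each term by a suitably re-indexed term of the target join, using Proposition \ref{4.3}(i)--(ii), axiom (E2), and the $L$-subgroup / normality axioms for $S$.

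For (a) I would write $\overline{S}(-a)=\bigvee_{x,y\in G}\big(S(x)\wedge S(y)\wedge e(x,-a)\wedge e(-a,y)\big)$, replace $e(x,-a)=e(a,-x)$ and $e(-a,y)=e(-y,a)$ by Proposition \ref{4.3}(ii), and then substitute $x\mapsto -x$, $y\mapsto -y$ (bijections of $G$) while using $S(-x)=S(x)$. This transforms the join into $\bigvee_{x,y}\big(S(x)\wedge S(y)\wedge e(a,x)\wedge e(y,a)\big)$, which (after swapping the dummy names $x,y$) is precisely $\overline{S}(a)$.

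For (b) I would start from
\[
\overline{S}(u)\wedge\overline{S}(v)=\bigvee_{t_1,t_2,s_1,s_2\in G}\big(S(t_1)\wedge S(t_2)\wedge S(s_1)\wedge S(s_2)\wedge e(t_1,u)\wedge e(u,t_2)\wedge e(s_1,v)\wedge e(v,s_2)\big),
\]
and dominate the generic summand by the $(a,b)=(t_1+s_1,\ t_2+s_2)$ summand of $\overline{S}(u+v)$. Indeed $S(t_1+s_1)\ge S(t_1)\wedge S(s_1)$ and $S(t_2+s_2)\ge S(t_2)\wedge S(s_2)$ since $S$ is an $L$-subgroup; and by Proposition \ref{4.3}(i), $e(t_1,u)=e(t_1+s_1,u+s_1)$ and $e(s_1,v)=e(u+s_1,u+v)$, so (E2) gives $e(t_1,u)\wedge e(s_1,v)\le e(t_1+s_1,u+v)$, and symmetrically $e(u,t_2)\wedge e(v,s_2)\le e(u+v,t_2+s_2)$. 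Hence each summand is $\le \overline{S}(u+v)$, and so is the join. For (c), I would bound the generic summand $S(t_1)\wedge S(t_2)\wedge e(t_1,v)\wedge e(v,t_2)$ of $\overline{S}(v)$ by the $(a,b)=(u+t_1-u,\ u+t_2-u)$ summand of $\overline{S}(u+v-u)$: normality of $S$ gives $S(u+t_i-u)=S(t_i)$, and conjugation-invariance of $e$ (Proposition \ref{4.3}(i) with conjugating element $u$) gives $e(t_1,v)=e(u+t_1-u,\,u+v-u)$ and $e(v,t_2)=e(u+v-u,\,u+t_2-u)$; joining over $t_1,t_2$ yields $\overline{S}(v)\le\overline{S}(u+v-u)$, which is the required normality (equality then being automatic, as noted after Definition \ref{L-subgroup}). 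Combining (a), (b), (c) with the convexity from Example \ref{make-convex} completes the proof.

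The one point that needs care --- and the only place anything can go wrong --- is keeping the index substitutions consistent: for each term we must move every factor $e(\cdot,\cdot)$ by the \emph{correct} one-sided or two-sided translation (right by $s_1$, left by $u$, etc.) \emph{before} applying transitivity (E2). Once the substitutions $a=t_1+s_1$, $b=t_2+s_2$ (for closure under $+$) and $a=u+t_1-u$, $b=u+t_2-u$ (for normality) are fixed, the rest is a mechanical application of Proposition \ref{4.3} and the axioms for $S$.
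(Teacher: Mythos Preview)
Your proof is correct and follows essentially the same strategy as the paper's: convexity via Example \ref{make-convex}, then verification of the $L$-subgroup axioms by expanding the defining joins and re-indexing through translation invariance (Proposition \ref{4.3}(i)--(ii)) together with (E2) and the $L$-subgroup axioms for $S$. Your argument is in fact slightly more complete than the paper's, since you explicitly carry out the normality check for $\overline{S}$ (your part (c)), a step the paper's own proof leaves implicit.
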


\begin{proof}
By Example \ref{make-convex}, we know that $\overline{S}$ is convex. Choose $a,b\in G$.
\begin{eqnarray*}
\overline{S}(a)\wedge \overline{S}(b)&=& \vee_{x,x'\in G}(S(x)\wedge S(x')\wedge e(x,a)\wedge e(a,x'))\wedge
\vee_{y,y'\in G}(S(y)\wedge S(y')\wedge e(y,b)\wedge e(b,y'))\\
&=&\vee_{x,y,x',y'\in G}(S(x)\wedge S(y)\wedge e(x,a)\wedge e(y,b)\wedge S(x')\wedge S(y')\wedge e(a,x') \wedge e(b,y')).
\end{eqnarray*}
By ${\rm(E2)}$, for each $u,v\in G$, $e(0,u+v)=e(-v,u)\geq e(-v,0)\wedge e(0,u)=e(0,v)\wedge e(0,u)$, so by Proposition \ref{4.3}(i),
$e(x,a)\wedge e(y,b)=e(0,-x+a)\wedge e(0,b-y)\leq e(0,-x+a+b-y)=e(x+y,a+b)$. In a similar way, $e(a,x') \wedge e(b,y')\leq
e(a+b,x'+y')$, whence $\overline{S}(a)\wedge \overline{S}(b)\leq
\vee_{x,y,x',y'\in G}(S(x)\wedge S(y)\wedge e(x+y,a+b)\wedge S(x')\wedge S(y')\wedge e(a+b,x'+y'))$. By the assumption, $S(x)\wedge S(y)\leq S(x+y)$
and $S(x')\wedge S(y')\leq S(x'+y')$, hence
\begin{eqnarray*}
\overline{S}(a)\wedge \overline{S}(b)&\leq& \vee_{x,y,x',y'\in G}(S(x+y)\wedge e(x+y,a+b)\wedge S(x'+y')\wedge e(a+b,x'+y'))\\
&=& \vee_{u,v\in G}(S(u)\wedge e(u,a+b)\wedge S(v)\wedge e(a+b,v)), \mbox{ since $G$ is a group, }\\
&=& \overline{S}(a+b).
\end{eqnarray*}
Also by Propositione \ref{4.3}(i) we have, $S(-a)=\vee_{x,x'\in G}(S(x)\wedge S(x')\wedge e(x,-a)\wedge e(-a,x'))=
\vee_{x,x'\in G}(S(x)\wedge S(x')\wedge e(a,-x)\wedge e(-x',a))$. Since $G$ is a group
$\vee_{x,x'\in G}(S(x)\wedge S(x')\wedge e(a,-x)\wedge e(-x',a))=
\vee_{x,x'\in G}(S(-x)\wedge S(-x')\wedge e(a,x)\wedge e(x',a))=
\vee_{x,x'\in G}(S(x)\wedge S(x')\wedge e(a,x)\wedge e(x',a))=\overline{S}(a)$.
Therefore, $\overline{S}$ is an $L$-subgroup of $(G;e,+,0)$.
\end{proof}

\begin{defn}
A normal convex $L$-subgroup of an $L$-ordered group $(G;e,+,0)$ is called an {\it $L$-filter}.
\end{defn}

\begin{thm}\label{quotient}
Let $S$ be an $L$-filter of an $L$-ordered group $(G;e,+,0)$ and $S(0)=\alpha$.
\begin{itemize}
\item[{\em (i)}] $(G+S;\tilde{e},\oplus,S)$ is an $L$-ordered group,
where the map $\tilde{e}:G+S\times G+S\ra L$ sending
every element $(a+S,b+S)$ of $G+S\times G+S$ to $\vee_{x\in S^{-1}(\alpha)}e(a-b,x)$.
\item[{\em (ii)}] $\tilde{S}:G+S\ra L$, defined by $\tilde{S}(a+S)=S(a)$ for all $a\in G$, is an
$L$-filter of $(G+S;\tilde{e},\oplus,S)$ such that $(\tilde{S})^{-1}(\alpha)$ has
 exactly one element.
\end{itemize}
\end{thm}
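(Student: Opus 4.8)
The plan is to work with the concrete realization of $G+S$ as the set of cosets of the normal subgroup $N:=S^{-1}(\alpha)$ (Proposition \ref{Quo}), so that $a+S=b+S$ precisely when $a-b\in N$ and $\tilde e(a+S,b+S)=\bigvee_{x\in N}e(a-b,x)$. For part (i) the content is that $\tilde e$ is well defined, that $(G+S;\tilde e)$ is an $L$-ordered set (axioms (E1)--(E3)), and that the two translations are monotone ((FOG)), the group structure being already supplied by Proposition \ref{Quo}(iii). I would first record the auxiliary fact that for $c\in G$ the quantity $\bigvee_{x\in N}e(c,x)$ depends only on the coset $c+N$: if $c'=r+c$ with $r\in N$ then $e(c',x)=e(c,-r+x)$ by Proposition \ref{4.3}(i), and $-r+x$ ranges over $N$ as $x$ does; since changing representatives replaces $a-b$ by an element of the same $N$-coset (using normality of $N$), this yields well-definedness. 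Axiom (E1) is immediate from $0\in N$ and $e(0,0)=1$. For (E2), expand the meet of two suprema by the infinite distributive law, and for fixed $x,y\in N$ use Proposition \ref{4.3}(i) to rewrite $e(a-b,x)=e(a-c,\,x+b-c)$ and $e(b-c,y)=e(x+b-c,\,x+y)$, so that (E2) for $e$ gives $e(a-b,x)\wedge e(b-c,y)\le e(a-c,\,x+y)$ with $x+y\in N$; taking suprema yields (E2) for $\tilde e$.

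The step I expect to be the main obstacle is antisymmetry (E3): $\tilde e$ is a supremum over $N$, and in a frame a supremum equal to $1$ need not have a single term equal to $1$, so one cannot read off $a+S=b+S$ directly. This is precisely where the hypothesis that $S$ is an $L$-filter, hence convex, is essential. Writing $c=a-b$, the assumption $\tilde e(a+S,b+S)=\tilde e(b+S,a+S)=1$ gives $\bigvee_{x\in N}e(c,x)=1$ and, after using $e(-c,y)=e(-y,c)$ (Proposition \ref{4.3}(ii)) and letting $-y$ range over $N$, also $\bigvee_{w\in N}e(w,c)=1$. Convexity of $S$ with both auxiliary points taken in $N$ (where $S$ equals $\alpha$) gives $S(c)\ge\alpha\wedge e(w,c)\wedge e(c,x)$ for all $w,x\in N$; taking the supremum over $w$ and over $x$ and using the infinite distributive law gives $S(c)\ge\alpha\wedge(\bigvee_{w\in N}e(w,c))\wedge(\bigvee_{x\in N}e(c,x))=\alpha$, while $S(c)\le S(0)=\alpha$ by Proposition \ref{Quo}(i); hence $c\in N$ and $a+S=b+S$. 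For the monotonicity axiom (FOG) I would actually establish the equality $\tilde e((b+x+a)+S,(b+y+a)+S)=\tilde e(x+S,y+S)$: in $G$ one has $(b+x+a)-(b+y+a)=b+(x-y)-b$, and by Proposition \ref{4.3}(i), $e(b+(x-y)-b,\,t)=e(x-y,\,-b+t+b)$, where $-b+t+b$ ranges over $-b+N+b=N$ by normality; together with Proposition \ref{Quo}(iii) this proves (i).

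For part (ii), $\tilde S$ is well defined since for $p\in N$ we get $S(a+p)\ge S(a)\wedge S(p)=S(a)\wedge\alpha=S(a)$ and symmetrically, so $S(a+p)=S(a)$; and $\tilde S$ is a normal $L$-subgroup of $(G+S;\tilde e,\oplus,S)$ by transporting the defining inequalities of $S$ through $(a+S)\oplus(b+S)=(a+b)+S$ and $-(a+S)=(-a)+S$. For convexity I would invoke Proposition \ref{L-subgroup prop}, now applicable because by part (i) $\tilde S$ is a normal $L$-subgroup of an $L$-ordered group: it suffices to prove $\tilde S(a+S)\ge\tilde S(x+S)\wedge\tilde e(S,a+S)\wedge\tilde e(a+S,x+S)$. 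One has $\tilde e(S,a+S)=\bigvee_{w\in N}e(w,a)$ (via $e(-a,t)=e(-t,a)$) and $\tilde e(a+S,x+S)=\bigvee_{s\in N}e(a,\,s+x)$ (via $e(a-x,s)=e(a,s+x)$), and $S(s+x)\ge S(x)$; applying convexity of $S$ with auxiliary points $w\in N$ and $s+x$, then taking a supremum over $w,s\in N$ with the infinite distributive law, gives exactly the required inequality, so $\tilde S$ is an $L$-filter. Finally, $\tilde S(a+S)=\alpha$ forces $S(a)=\alpha$, i.e. $a\in N$, i.e. $a+S=0+S=S$ by Proposition \ref{Quo}(iv), while $\tilde S(S)=S(0)=\alpha$; hence $(\tilde S)^{-1}(\alpha)=\{S\}$ has exactly one element.
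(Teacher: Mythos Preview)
Your proposal is correct and follows essentially the same route as the paper: well-definedness of $\tilde e$ via translation invariance and the subgroup $N=S^{-1}(\alpha)$, (E1) from $0\in N$, (E2) by rewriting both terms so that transitivity of $e$ applies, (E3) from convexity of $S$ together with the frame distributive law, and (FOG) by conjugation and normality of $N$; for part (ii) both you and the paper verify convexity of $\tilde S$ through Proposition~\ref{L-subgroup prop} by reducing $\tilde e(0+S,a+S)$ and $\tilde e(a+S,x+S)$ to suprema of values of $e$ and invoking convexity of $S$. The only cosmetic differences are that you factor well-definedness through the coset-invariance of $c\mapsto\bigvee_{x\in N}e(c,x)$ (using normality explicitly), whereas the paper computes directly with the specific representatives, and your (E2) rewriting uses $x+b-c$ as the intermediate point while the paper uses $b$; these are equivalent manipulations.
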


\begin{proof}
(i) First, we show that $\tilde{e}$ is well defined. Let $a+S=a'+S$ and $b+S=b'+S$, for some $a,a',b,b'\in G$.
The $a+S^{-1}(\alpha)=a'+S^{-1}(\alpha)$ and $b+S^{-1}(\alpha)=b'+S^{-1}(\alpha)$, so
there exist $s,t\in S^{-1}(\alpha)$ such that $a-a'=s$ and $b-b'=t$. Take $x\in S^{-1}(\alpha)$.
Then by Proposition \ref{4.3}(i),
$e(a-b,x)=e(-s+a-b+t,-s+x+t)=e((-s+a)-(-t+b),-s+x+t)=e(a'-b',-s+x+t)$.
Since $S^{-1}(\alpha)$ is a subgroup of $(G;+,0)$, then $-s+x+t\in S^{-1}(\alpha)$. It follows that
$e(a-b,x)=e(a'-b',-s+x+t)\leq \vee_{y\in S^{-1}(\alpha)}e(a'-b',y)=\tilde{e}(a'+S,b'+S)$.
Since $x$ is an arbitrary element of $S^{-1}(\alpha)$, then we
get $\tilde{e}(a+S,b+S)\leq \tilde{e}(a'+S,b'+S)$. In a similar way, we
can show that $\tilde{e}(a'+S,b'+S)\leq \tilde{e}(a+S,b+S)$. So,
$\tilde{e}$ is well defined. Now, we show that
$(G+S;\tilde{e}),$ is an $L$-ordered set.

(1) Let $a\in G$. $\tilde{e}(a+S,a+S)=\vee_{x\in S^{-1}(\alpha)}e(a-a,x)=\vee_{x\in S^{-1}(\alpha)}e(0,x)$. From
$0\in S^{-1}(\alpha)$ it follows that $e(0,0)\leq \vee_{x\in S^{-1}(\alpha)}e(0,x)=\tilde{e}(a+S,a+S)$.

(2) Let $a,b,c\in G$. Then by Proposition \ref{4.3},
\begin{eqnarray*}
\tilde{e}(a+S,b+S)\wedge \tilde{e}(b+S,c+S)&=& \vee_{x\in S^{-1}(\alpha)}e(a-b,x) \ \wedge \  \vee_{t\in S^{-1}(\alpha)}e(b-c,t)\\
&=& \vee_{x\in S^{-1}(\alpha)}e(-x+a,b) \ \wedge \  \vee_{t\in S^{-1}(\alpha)}e(b,t+c)\\
&=& \vee_{t\in S^{-1}(\alpha)}[(\vee_{x\in S^{-1}(\alpha)}e(-x+a,b))\wedge e(b,t+c)]\\
&=& \vee_{t\in S^{-1}(\alpha)} \vee_{x\in S^{-1}(\alpha)} [e(-x+a,b)\wedge e(b,t+c)]\\
&\leq & \vee_{t\in S^{-1}(\alpha)} \vee_{x\in S^{-1}(\alpha)} e(-x+a,t+c)\\
&=& \vee_{t\in S^{-1}(\alpha)} \vee_{x\in S^{-1}(\alpha)} e(a-c,x+t)\\
&=&  \vee_{x,t\in S^{-1}(\alpha)} e(a-c,x+t).
\end{eqnarray*}
Since $S^{-1}(\alpha)$ is a subgroup of $G$, then $\{e(a-c,x+t)|\ x,t\in S^{-1}(\alpha) \}=\{e(a-c,x)|\ x\in S^{-1}(\alpha)\}$. It follows that
$\tilde{e}(a+S,b+S)\wedge \tilde{e}(b+S,c+S)\leq \vee_{x,t\in S^{-1}(\alpha)} e(a-c,x+t)=
\vee \{e(a-c,x)|\ x\in S^{-1}(\alpha)\}=\tilde{e}(a+S,c+S)$.

(3) Let $a,b\in G$ such that $\tilde{e}(a+S,b+S)=\tilde{e}(b+S,a+S)=1$. Then
$$\vee_{x\in S^{-1}(\alpha)}e(a-b,x)=1=\vee_{x\in S^{-1}(\alpha)}e(b-a,x).$$
Since $S$ is convex, then
$S(b-a)\geq S(x)\wedge S(t)\wedge e(x,a-b)\wedge e(a-b,t)=\alpha \wedge e(x,a-b)\wedge e(a-b,t)$
for all $x,t\in S^{-1}(\alpha)$. Thus,
\begin{eqnarray*}
S(a-b)&\geq& \vee_{x,t\in S^{-1}(\alpha)} [\alpha \wedge e(x,a-b)\wedge e(a-b,t)]\\
&=& \alpha \wedge (\vee_{x,t\in S^{-1}(\alpha)} [e(x,a-b)\wedge e(a-b,t)])\\
&=& \alpha\wedge  (\vee_{x\in S^{-1}(\alpha)} e(x,a-b) \ \wedge \ \vee_{t\in S^{-1}(\alpha)} e(a-b,t))\\
&=& \alpha\wedge (\vee_{x\in S^{-1}(\alpha)} e(b-a,-x) \ \wedge \ \vee_{t\in S^{-1}(\alpha)} e(a-b,t)).
\end{eqnarray*}
Since $S^{-1}(\alpha)$ is a subgroup of $G$, then $\vee_{x\in S^{-1}(\alpha)} e(b-a,-x)=\vee_{x\in S^{-1}(\alpha)} e(b-a,x)$ and so
\begin{eqnarray*}
S(a-b)&\geq& \alpha\wedge(\vee_{x\in S^{-1}(\alpha)} e(b-a,x) \ \wedge \ \vee_{t\in S^{-1}(\alpha)} e(a-b,t))\\
&=& \alpha \wedge 1\wedge 1=\alpha.
\end{eqnarray*}
Hence $a-b\in S^{-1}(\alpha)$, which implies that $a+S^{-1}(\alpha)=b+S^{-1}(\alpha)$ and
$a+S=b+S$.

Now, let $a,b,c\in G$.
$\tilde{e}((a+S)\oplus (c+S), (b+S)\oplus (c+S))=\tilde{e}((a+c+S), (b+c+S))=\vee_{x\in S^{-1}(\alpha)}
e(a+c-(b+c),x)=\vee_{x\in S^{-1}(\alpha)}e(-x+a+c-(b+c),0)=\vee_{x\in S^{-1}(\alpha)}e(-x+a+c,b+c)
=\vee_{x\in S^{-1}(\alpha)}e(-x+a,b)=\vee_{x\in S^{-1}(\alpha)}e(a-b,x)=\tilde{e}((a+S), (b+S))$. Therefore,
$(G+S;\tilde{e},\oplus,S)$ is an $L$-ordered group.

(ii) Clearly, $\tilde{S}$ is well defined. In fact, if $a+S=b+S$, then $S(a-b)=S(0)$, so $a-b=x$, for some
$x\in S^{-1}(\alpha)$. It follows that, $S(a)=S(x+b)\geq S(x)\wedge S(b)=\alpha \wedge S(b)=S(b)$. In a similar
way, we can show that $S(b)\geq S(a)$, whence $\tilde{S}$ is well defined. Choose $a,b\in G$.
$\tilde{S}(-(a+S))=\tilde{S}(-a+S)=S(-a)=S(a)=\tilde{S}(a+S)$.
Clearly, $\tilde{S}((a+S)\oplus(b+S))=\tilde{S}((a+b)+S)
=S(a+b)\geq S(a)\wedge S(b)=\tilde{S}(a+S)\wedge \tilde{S}(b+S)$.
If $x\in G$, then $\tilde{S}(x+S)\wedge \tilde{e}(0+S,a+S)\wedge \tilde{e}(a+S,x+S)=S(x)\wedge \vee_{t\in S^{-1}(\alpha)}e(a-x,t)\wedge \vee_{t'\in S^{-1}(\alpha)}e(t',a)$. For each $t\in S^{-1}(\alpha)$,  $S(x)=S(x+t-t)\geq S(x+t)\wedge S(t)=S(x+t)$, so
$\tilde{S}(x+S)\wedge \tilde{e}(0+S,a+S)\wedge \tilde{e}(a+S,x+S)=\vee_{t,t'\in S^{-1}(\alpha)}(S(x+t)\wedge e(a-x,t)\wedge e(t',a))=
\vee_{t,t'\in S^{-1}(\alpha)}(S(x+t)\wedge e(a,t+x)\wedge e(t',a))=\vee_{t,t'\in S^{-1}(\alpha)}(S(x+t)\wedge S(t)\wedge e(a,t+x)\wedge e(t',a))\leq S(a)$
(since $S$ is convex). It follows that $\tilde{S}(x+S)\wedge \tilde{e}(0+S,a+S)\wedge \tilde{e}(a+S,x+S)\leq S(a)=\tilde{S}(a+S)$.
By Proposition \ref{L-subgroup prop}, $\tilde{S}$ is convex.
Moreover, $\tilde{S}$ satisfies the normal condition ($\tilde{S}((a+S)\oplus(b+S)\oplus (-a+S) )=\tilde{S}((a+b-a)+S)=S(a+b-a)=S(b)=\tilde{S}(b+S)$). In addition,
for each $a\in G$, $\tilde{S}(a+S)=\alpha$  implies that $S(a)=\alpha$,
hence $a+S=0+S$ (see the note before Theorem \ref{quotient}).
Therefore, $(\tilde{S})^{-1}(\alpha)$ has exactly one element.
\end{proof}

\begin{cor}
Let $S$ be an $L$-filter of an $L$-ordered group $(G;e,+,0)$. Then the map
$\pi_{S}:G\ra G+S$, sending $g$ to $g+S$, is both a group homomorphism and monotone.
\end{cor}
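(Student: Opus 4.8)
The plan is to verify the two asserted properties of the canonical projection $\pi_S\colon G\to G+S$ separately, since both follow quickly from machinery already developed. First I would recall that Proposition~\ref{Quo}(iii) gives that $(G+S;\oplus,S)$ is a group with $(x+S)\oplus(y+S)=(x+y)+S$ and $-(x+S)=-x+S$; therefore $\pi_S(x+y)=(x+y)+S=(x+S)\oplus(y+S)=\pi_S(x)\oplus\pi_S(y)$, so $\pi_S$ is a group homomorphism essentially by definition. (One may also note $\pi_S(0)=0+S=S$, the identity of $G+S$.)

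For monotonicity, I must show $e(x,y)\leq \tilde e(\pi_S(x),\pi_S(y))=\tilde e(x+S,y+S)$ for all $x,y\in G$. By the definition of $\tilde e$ from Theorem~\ref{quotient}(i), $\tilde e(x+S,y+S)=\bigvee_{t\in S^{-1}(\alpha)}e(x-y,t)$, where $\alpha=S(0)$. Since $0\in S^{-1}(\alpha)$ (as $S(0)=\alpha$), the join on the right is at least $e(x-y,0)$. By Proposition~\ref{4.3}(i), $e(x-y,0)=e(x-y+y,0+y)=e(x,y)$. Hence $e(x,y)=e(x-y,0)\leq \bigvee_{t\in S^{-1}(\alpha)}e(x-y,t)=\tilde e(x+S,y+S)$, which is exactly the monotonicity condition in the definition of a monotone map between $L$-ordered sets (note both $(G;e)$ and $(G+S;\tilde e)$ are $L$-ordered sets, the latter by Theorem~\ref{quotient}(i)).

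I do not anticipate any serious obstacle here: the corollary is a routine consequence of Theorem~\ref{quotient}(i) together with Proposition~\ref{Quo}(iii). The only point requiring a little care is making sure the element witnessing monotonicity, namely $t=0$, actually lies in $S^{-1}(\alpha)$; this is immediate from $\alpha=S(0)$ and from Proposition~\ref{Quo}(ii), which guarantees $S^{-1}(\alpha)$ is a (normal) subgroup of $G$ and in particular contains $0$. If one wanted a fully self-contained argument one could also invoke Proposition~\ref{cone}, since monotonicity of a group homomorphism between $L$-ordered groups is equivalent to $e(0,x)\leq e(0,f(x))$, but the direct computation above is shorter and uses only what is needed.
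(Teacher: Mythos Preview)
Your argument is correct. The paper states this corollary without proof, treating it as an immediate consequence of Theorem~\ref{quotient}(i) and Proposition~\ref{Quo}(iii); your write-up supplies exactly the expected details, namely that $\pi_S$ is a group homomorphism by the definition of $\oplus$ and monotone because $0\in S^{-1}(\alpha)$ gives $e(x,y)=e(x-y,0)\leq\bigvee_{t\in S^{-1}(\alpha)}e(x-y,t)=\tilde e(x+S,y+S)$.
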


By the above corollary, if $S$ is an $L$-filter of $(G;e,+,0)$, then
there exists an $L$-order relation on $(G+S;\oplus,0+S)$ such that the natural map $\pi_{S}:G\ra G+S$
is a group homomorphism and a monotone map. Moreover, $\tilde{S}$ is normal and convex.
Now, let $S$ be an $L$-subgroup of $(G;e,+,0)$ such that
there exists an $L$-order relation $f$ on $G+S$ that make $(G+S;\oplus,0+S)$ an $L$-ordered
group, $\pi_S:(G;e,+,0)\ra (G+S;f,\oplus,0+S)$ is monotone and in $(G+S;f,\oplus,0+S)$ $\tilde{S}$ is normal and convex.
We claim that $S$ is normal and convex, too. Let $x,a\in G$.
Since $\tilde{S}$ is an $L$ ideal of $(G+S;f,\oplus,0+S)$, then by Proposition \ref{Quo}(ii),
$S(a+x-a)=\tilde{S}((a+S)\oplus (x+S)\oplus (-a+S))=\tilde{S}((x+S)\oplus (a+S)\oplus (-a+S))=\tilde{S}
(x+S)=S(x)$, so $S$ is normal.
By Proposition \ref{L-subgroup prop}, it suffices to show that $S(x)\wedge e(0,a)\wedge e(a,x)\leq S(a)$.
Since $\pi_S:(G;e,+,0)\ra (G+S;f,\oplus,0+S)$ is monotone, then
$S(x)\wedge e(0,a)\wedge e(a,x)\leq S(x)\wedge f(0+S,a+S)\wedge f(a+S,x+S)=\tilde{S}(x+S)\wedge f(0+S,a+S)\wedge f(a+S,x+S)$.
By the assumption, $\tilde{S}(x+S)\wedge f(0+S,a+S)\wedge f(a+S,x+S)\leq \tilde{S}(a)=S(a)$. Therefore, $S$ is convex.

\begin{thm}\label{iso}
Let $(G;e_G,+,0)$ and $(H;e_H,+,0)$ be two $L$-ordered groups,
$f:\!\!(G;e_G,+,0) \!\!\ra\!\! (H;e_H,+,0)$ be both a group homomorphism and a monotone map.
Then the map $K(f):G\ra L$ defined by $K(f)(x)=e_{H}(0,f(x))\wedge e_{H}(f(x),0)$ is an $L$-filter of $G$ and
$\tilde{f}:G+K(f)\ra H$ is a one-to-one group homomorphism and monotone.
\end{thm}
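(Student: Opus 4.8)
The plan is to verify the three claims in turn: first that $K(f)$ is a normal $L$-subgroup, then that it is convex (hence an $L$-filter), and finally that the induced map $\tilde f$ is well defined, injective, a group homomorphism, and monotone. Throughout I would write $k(x):=e_H(0,f(x))\wedge e_H(f(x),0)$ for brevity and use Proposition \ref{4.3}(i),(ii) freely, together with the fact that $f$ is a group homomorphism so $f(x+y)=f(x)+f(y)$, $f(-x)=-f(x)$, $f(0)=0$.

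\emph{Step 1: $K(f)$ is a normal $L$-subgroup.} For the subgroup condition (i) of Definition \ref{L-subgroup}, starting from $e_H(0,f(x))\wedge e_H(0,f(y))$ I would use the inequality $e_H(0,u)\wedge e_H(0,v)\leq e_H(0,u+v)$ (which appears in the proof of Proposition \ref{convex-subgroup}, via $e_H(0,u+v)=e_H(-v,u)\geq e_H(-v,0)\wedge e_H(0,u)$) to get $e_H(0,f(x)+f(y))=e_H(0,f(x+y))$, and symmetrically for the $e_H(\cdot,0)$ factors; combining gives $K(f)(x)\wedge K(f)(y)\leq K(f)(x+y)$. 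Condition (ii), $K(f)(-x)=K(f)(x)$, is immediate from Proposition \ref{4.3}(ii): $e_H(0,f(-x))=e_H(0,-f(x))=e_H(f(x),0)$ and likewise the other factor swaps. Normality, $K(f)(x+y-x)\geq K(f)(y)$, follows because $f(x+y-x)=f(x)+f(y)-f(x)$ and by Proposition \ref{4.3}(i) $e_H(0,f(x)+f(y)-f(x))=e_H(f(x),f(x)+f(y)-f(x)+f(x))$ — more directly, conjugation by $f(x)$ is an $L$-order automorphism of $H$ fixing $0$ up to the translation identity, so both defining factors of $k$ are unchanged.

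\emph{Step 2: $K(f)$ is convex.} By Proposition \ref{L-subgroup prop} it suffices to check $K(f)(a)\geq K(f)(x)\wedge e_G(0,a)\wedge e_G(a,x)$ for all $x,a\in G$. Since $f$ is monotone, $e_G(0,a)\leq e_H(f(0),f(a))=e_H(0,f(a))$ and $e_G(a,x)\leq e_H(f(a),f(x))$. Hence the right-hand side is at most $\bigl(e_H(0,f(x))\wedge e_H(f(x),0)\bigr)\wedge e_H(0,f(a))\wedge e_H(f(a),f(x))$. Now $e_H(0,f(a))$ is already one of the two factors of $K(f)(a)$; for the other, $e_H(f(a),f(x))\wedge e_H(f(x),0)\leq e_H(f(a),0)$ by (E2). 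So the whole expression is $\leq e_H(0,f(a))\wedge e_H(f(a),0)=K(f)(a)$, as required. Thus $K(f)$ is a normal convex $L$-subgroup, i.e.\ an $L$-filter.

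\emph{Step 3: $\tilde f$ is well defined, injective, homomorphic, monotone.} Set $\alpha=K(f)(0)=1$, so $K(f)^{-1}(1)=\ker f$ (indeed $K(f)(x)=1$ iff $e_H(0,f(x))=e_H(f(x),0)=1$ iff $f(x)=0$ by (E3)). Define $\tilde f(x+K(f))=f(x)$. Well-definedness and injectivity both reduce to: $x+K(f)=y+K(f)\iff x-y\in\ker f\iff f(x)=f(y)$, using Proposition \ref{Quo}(iv) to pass between the two coset notions; the homomorphism property is then immediate from $f(x+y)=f(x)+f(y)$. For monotonicity I would compute, for $a,b\in G$,
\[
\tilde e\bigl(a+K(f),\,b+K(f)\bigr)=\bigvee_{t\in\ker f} e_G(a-b,t)\le \bigvee_{t\in\ker f} e_H\bigl(f(a-b),f(t)\bigr)=\bigvee_{t\in\ker f} e_H\bigl(f(a)-f(b),0\bigr)=e_H\bigl(f(a),f(b)\bigr),
\]
using that $f$ is monotone and $f(t)=0$, and Proposition \ref{4.3}(i)--(ii). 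This is exactly $e_H(\tilde f(a+K(f)),\tilde f(b+K(f)))$, so $\tilde f$ is monotone.

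The routine parts are Steps 1 and the well-definedness in Step 3; the only place demanding a little care is the convexity argument in Step 2, where one must arrange the $e_H$ factors so that monotonicity of $f$ and transitivity (E2) combine in the right order — that is the step I would expect to be the main (mild) obstacle.
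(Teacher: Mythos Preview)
Your proposal is correct and follows essentially the same route as the paper's proof: verify that $K(f)$ is a normal $L$-subgroup, then convex (hence an $L$-filter), then check that $\tilde f$ is well defined, injective, a homomorphism, and monotone via the identity $K(f)^{-1}(1)=\ker f$ and the definition of $\tilde e$. The only cosmetic differences are that you invoke Proposition~\ref{L-subgroup prop} for convexity (the paper checks the full two-variable condition of Definition~\ref{convex} directly) and you cite Proposition~\ref{Quo}(iv) for the coset equivalence (the paper instead computes $(a+K(f))(x)=(b+K(f))(x)$ explicitly); both shortcuts are legitimate and the underlying computations are the same.
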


\begin{proof}
Since $f$ is a group homomorphism, by Proposition \ref{4.3}(ii), for any $x\in G$, $K(f)(-x)=e_{H}(0,f(-x))\wedge e_{H}(f(-x),0)=e_{H}(0,-f(x))\wedge e_{H}(-f(x),0)=e_{H}(f(x),0)\wedge e_{H}(0,f(x))=K(f)(x)$. Also,
$K(f)(x+y)=e_{H}(0,f(x+y))\wedge e_{H}(f(x+y),0)=e_{H}(0,f(x)+f(y))\wedge e_{H}(f(x)+f(y),0)=e_{H}(-f(y),f(x))\wedge e_{H}(f(y),-f(x))\geq
e_{H}(-f(y),0)\wedge e_H(0,f(x))\wedge e_{H}(f(y),0)\wedge e_H(0,-f(x))=e_{H}(0,f(y))\wedge e_H(0,f(x))\wedge e_{H}(f(y),0)\wedge e_H(f(x),0)=
K(f)(x)\wedge K(f)(y)$, so $K(f)$ is an $L$-subgroup of $G$. On the other hand, for all $x,y,a\in G$, we have
$K(f)(x)\wedge K(f)(y)\wedge e_G(x,a)\wedge e_G(a,y)=e_{H}(0,f(x))\wedge e_{H}(f(x),0)\wedge e_{H}(0,f(y))\wedge e_{H}(f(y),0)
\wedge e_G(x,a)\wedge e_G(a,y)$. Since $f$ is a monotone map, then $e_G(x,a)\leq e_{H}(f(x),f(a))$ and $e_G(a,y)\leq e_H(f(a),f(y))$. It follows that
$K(f)(x)\wedge K(f)(y)\wedge e_G(x,a)\wedge e_G(a,y)\leq  e_{H}(0,f(x))\wedge e_{H}(f(x),0)\wedge e_{H}(0,f(y))\wedge e_{H}(f(y),0)\wedge e_{H}(f(x),f(a))\wedge
e_H(f(a),f(y))\leq e_{H}(0,f(a))\wedge e_{H}(f(a),0)\wedge e_{H}(f(x),0)e_{H}(0,f(y))\leq K(f)(a)$, so $K(f)$ is convex. It can be easily shown that $K(f)(a+x-a)=K(f)(x)$ for all $a,x\in G$.
Thus $K(f)$ is an $L$-filter of $(G;e_G,+,0)$. By Theorem \ref{quotient},
$(G+K(f),\tilde{e_G},\oplus,K(f))$ is an $L$-ordered group. Consider the map $\varphi :G+K(f)\ra H$ sending each element $a+K(f)\in G+K(f)$ to
$f(a)$. If $a+K(f)=b+K(f)$, then $e_H(f(a),f(b))\wedge e_H(f(b),f(a))=K(f)(a-b)=K(f)(0)=1$, whence by ${\rm (E2)}$, $f(a)=f(b)$. Clearly $\tilde{f}$ is a one-to-one
group homomorphism. Moreover, if $f(a)=f(b)$, then for each $x\in G$, $aK(f)(x)=K(f)(x-a)=e_H(0,f(x-a))\wedge e_H(f(x-a),0)=e_H(0,f(x)-f(a))\wedge e_H(f(x)-f(a),0)=e_H(0,f(x)-f(b))\wedge e_H(f(x)-f(b),0)=e_H(0,f(x-b))\wedge e_H(f(x-b),0)=bK(f)(x)$, thus $\tilde{f}$ is one-to-one.
It is easy to prove that $K(f)(x)=1$ if and only if $x\in \ker(f)=f^{-1}(0)$.
Finally, we show that $\tilde{f}$ is monotone. Take $a,b\in G$. Since $f$ is monotone, for each $t\in \ker(f)$, we have
$e_G(a-b,t)\leq e_H(f(a-b),f(t))=e_H(f(a-b),0)=e_H(f(a),f(b))$ and so $\tilde{e}(a+K(f),b+K(f))=\vee_{t\in \ker(f)}  e_G(a-b,t)\leq e_H(f(a),f(b))$.
Therefore, $\tilde{f}$ is monotone.
\end{proof}

In the next theorem, we show that the positive cone of an $L$-ordered group $(G;e,+,0)$
has particular properties and each map $S:G\ra L$ satisfying these properties is the positive cone of a
suitable $L$-order relation on $G$.

\begin{thm}\label{S-positive cone}
Let $S$ be the positive cone of $(G;e,+,0)$. Then $S$ has the following properties:
\begin{itemize}
\item[{\em (i)}] $S(x)\wedge S(y)\leq S(x+y)$ for all $x,y\in G$.
\item[{\em (ii)}] $S(x)=S(-x)=1$ implies that $x=0$ for all $x\in G$.
\item[{\em (iii)}] $S(0)=1$, and $S(x+y-x)=S(y)$ for all $x,y\in G$.
\item[{\em (iv)}] $S$ is convex.
\end{itemize}
Moreover, if $(G;+,0)$ is a group and $S:G\ra L$ is a map satisfying the properties {\rm (i)},
{\rm (ii)} and {\rm (iii)},
then there exists a map $f:G\times G\ra L$ such that $(G;f,+,0)$ is an $L$-ordered group and $S$ is convex in this $L$-ordered group.
\end{thm}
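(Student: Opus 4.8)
The plan is to write down the obvious candidate --- the ``would-be positive cone relation'' --- and verify the required conditions. Define $f\colon G\times G\to L$ by
$$f(x,y):=S(y-x)\qquad(x,y\in G),$$
with $y-x$ meaning $y+(-x)$. First I would check that $(G;f)$ is an $L$-ordered set: (E1) is immediate from property (iii), since $f(x,x)=S(0)=1$. For (E2) I use the group identity $(z-y)+(y-x)=z-x$ together with property (i) and commutativity of $\wedge$:
$$f(x,y)\wedge f(y,z)=S(z-y)\wedge S(y-x)\le S\bigl((z-y)+(y-x)\bigr)=S(z-x)=f(x,z).$$
For (E3) I use $x-y=-(y-x)$: if $f(x,y)=f(y,x)=1$ then, with $w:=y-x$, we get $S(w)=S(-w)=1$, so property (ii) forces $w=0$, i.e.\ $x=y$.

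Next I would verify (FOG). For arbitrary $a,b,x,y\in G$ one has the group identity $(b+y+a)-(b+x+a)=b+(y-x)-b$, so $f(b+x+a,b+y+a)=S\bigl(b+(y-x)-b\bigr)$, and the normality condition (iii) (with its first slot occupied by $b$) gives $S\bigl(b+(y-x)-b\bigr)=S(y-x)=f(x,y)$. Hence in fact $f(x,y)=f(b+x+a,b+y+a)$, which is stronger than (FOG); so both translations are monotone and $(G;f,+,0)$ is an $L$-ordered group.

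It remains to see that $S$ is convex with respect to $f$. The quickest route is to observe that the positive cone of $(G;f,+,0)$ is exactly the map $x\mapsto f(0,x)=S(x-0)=S(x)$, i.e.\ it is $S$ itself, so $S$ is convex by part (iv), already established. Alternatively, and just as easily, one checks Definition \ref{convex} directly: using $(a-x)+x=a$ and property (i), for all $x,y,a\in G$
$$S(x)\wedge S(y)\wedge f(x,a)\wedge f(a,y)\le S(a-x)\wedge S(x)\le S\bigl((a-x)+x\bigr)=S(a).$$
Note that here one cannot simply invoke Proposition \ref{L-subgroup prop}, because a positive cone need not satisfy $S(x)=S(-x)$ and so need not be an $L$-subgroup in the sense of Definition \ref{L-subgroup}; this is the one point that calls for a little attention. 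Otherwise there is no real obstacle: the whole argument is a sequence of identities in the group $G$, and the only thing to watch is the non-commutative bookkeeping in $(b+y+a)-(b+x+a)=b+(y-x)-b$, $(z-y)+(y-x)=z-x$, $(a-x)+x=a$ and $x-y=-(y-x)$, all valid under the convention $u-v=u+(-v)$.
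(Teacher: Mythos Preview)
Your proposal addresses only the ``Moreover'' (converse) direction, but that is the substantive part of the theorem; the forward direction (i)--(iv) is a routine unpacking of $S(x)=e(0,x)$ together with Proposition \ref{4.3}(i),(ii), and the paper's proof of it is exactly what one would expect.

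For the converse you take essentially the same approach as the paper: define $f(x,y)=S(y-x)$ and verify (E1)--(E3), (FOG), and convexity. Your verification is in fact slightly tidier in two places. For (E2) you use $S(z-y)\wedge S(y-x)\le S((z-y)+(y-x))=S(z-x)$ directly, whereas the paper applies (i) in the order $S(b-a)\wedge S(c-b)\le S((b-a)+(c-b))$ and then needs the conjugation invariance from (iii) to rewrite $(b-a)+(c-b)$ as something with value $S(c-a)$. Similarly, for convexity you use $S(a-x)\wedge S(x)\le S((a-x)+x)=S(a)$, while the paper uses $S(x)\wedge S(a-x)\le S(x+(a-x))=S(x+a-x)$ and then again invokes (iii). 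Your ordering choices avoid these extra appeals to (iii). Your remark that Proposition \ref{L-subgroup prop} is unavailable here (since $S$ need not satisfy $S(x)=S(-x)$) is also correct and worth keeping.
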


\begin{proof}
Let $S$ be the positive cone of $(G;e,+,0)$. Choose $x,y\in G$. Clearly, $S(0)=1$.
$S(x+y)=e(0,x+y)=e(-y,x)\geq e(-y,0)\wedge e(0,x)=e(0,y)\wedge e(0,x)=S(y)\wedge S(x)$.
Moreover, if $S(x)=S(-x)=1$, then $e(0,x)=1=e(0,-x)=e(x,0)$, whence by ${\rm (E3)}$, $x=0$. Now, let $a\in G$. Then
$S(x)\wedge S(y)\wedge e(x,a)\wedge e(a,y)=e(0,x)\wedge e(0,y)\wedge e(x,a)\wedge e(a,y)\leq
e(0,a)\wedge e(0,y)\wedge e(a,y)\leq S(a)$. Hence, $S$ is convex and
$S(x+y-x)=e(0,x+y-x)=e(x,x+y)=e(0,y)=S(y)$ for all $x,y\in G$. Conversely, let $S:G\ra L$ be a
map with  properties (i)--(iii). For each $a,b\in G$, define $f(a,b)=S(b-a)$.
From (iii) it follows that ${\rm (E1)}$ holds.
Let $a,b,c\in G$. If $f(a,b)=f(b,a)=1$, then $S(b-a)=S(-(b-a))=1$ and so by (ii), $b=a$.
Also, for any $x\in G$, we have $f(x+a,x+b)=S(x+b-a-x)=S(b-a)=f(a,b)$ (by (iii))  and by (i) and (iii), $f(a,b)\wedge f(b,c)=
S(b-a)\wedge S(c-b)\leq S(b-a+c-b)=S(c-a)=f(a,c)$, so $(G;f,+,0)$ is an $L$-ordered group. Moreover,
$S(x)\wedge S(y)\wedge f(x,a)\wedge f(a,y)=S(x)\wedge S(y)\wedge S(a-x)\wedge S(y-a)\leq S(x+a-x)\wedge S(y)\wedge
S(y-a)\leq S(a)$. Therefore, $S$ is convex.
\end{proof}

Let $(G;e,+,0)$ be an $L$-ordered group and $T\in L^G$ be a normal $L$-subgroup of $G$ such that $T(0)=1$. Suppose
that $T$ is not convex. Define $f:G\times G\ra L$ by $f(x,y)=T^+(y-x)$. We claim that $(G;f,+,0)$ is an
$L$-ordered group. Put $x,y,z\in G$. Then $f(x,x)=T(x-x)\wedge e(0,x-x)=T(0)\wedge 1=1$ and
$f(x,y)=f(y,x)=1$ implies that $T(y-x)\wedge e(0,y-x)=T(x-y)\wedge e(0,x-y)=1$ and so
by Proposition \ref{4.3}(i), $e(x,y)=e(y,x)=1$, whence $x=y$. Also, $f(x,y)\wedge f(y,z)=
T(y-x)\wedge e(0,y-x)\wedge T(z-y)\wedge e(0,z-y)$. Since $T$ is a normal $L$-subgroup of $(G;e,+,0)$, then
we get $f(x,y)\wedge f(y,z)\leq T(y-x+z-y)\wedge e(0,y-x) \wedge e(0,z-y)\leq T(-x+z)\wedge e(x,y)\wedge e(y,z)\leq
T(z-x)\wedge e(x,z)=T(z-x)\wedge e(0,z-x)=f(x,z)$. Moreover,
$f(x+z,y+z)=T(y+z-z-x)\wedge e(0,y+z-z-x)=f(x,z)$. So, $(G;f,+,0)$ is an $L$-ordered group.
Now, we show that $T$ is a convex $L$-subgroup of $(G;f,+,0)$. Clearly, $T$ is an $L$-subgroup.
For any $a,x\in G$, we have
$T(x)\wedge f(0,a)\wedge f(a,x)=T(x)\wedge T(a)\wedge T(x-a)\wedge T(a)$, hence by
Proposition \ref{L-subgroup prop}, $T$ is convex in $(G;f,+,0)$ (note that $T$ is also, normal in $(G;f,+,0)$).

Theorem \ref{S-positive cone} is very useful. We can use it to make many $L$-order
relations on a group (specially an Abelian group), by applying the following remark.

\begin{rmk}\label{rmk}
Let $(G;+,0)$ be a group and $S:G\ra L$ be an arbitrary map such that $S(0)=1$ and for each $x\in G$, $S(x)=1$ implies that $x=0$.

(i) If there exists $\alpha\in L-\{1\}$ such that $S(x)\leq \alpha$ for all $x\in G-\{0\}$,
then define $\overline{S}:G\ra L$, by $\overline{S}(x)=\vee_{a\in G}S(a+x-a)$. It is clear that
$\overline{S}(x)=\overline{S}(a+x-a)$ for all $a\in G$. Consider that map, $T:G\ra L$, define by
$T(x)=\vee_{\{x_1,x_2,\ldots,x_n\in G|\ x_1+x_2+\cdots+x_n=x, \ \exists\, n\in\mathbb{N} \}}
\overline{S}(x_1)\wedge \overline{S}(x_2)\wedge\cdots\wedge \overline{S}(x_n)$.
Clearly, $T(0)=1$ and for each $x\in G$, $T(x)=T(-x)=1$, implies that $x=0$.
It is easy to show that $T(x)\wedge T(y)\leq T(x+y)$, for each $x,y\in G$.
Let $x,y\in G$. Then $T(x+y-x)=\vee_{\{x_1,x_2,\ldots,x_n\in G|\ x_1+x_2+\cdots+x_n=x+y-x, \ \exists\, n\in\mathbb{N} \}}\overline{S}(x_1)\wedge \overline{S}(x_2)\wedge\cdots\wedge \overline{S}(x_n)$.
If $x_1+x_2+\cdots+x_n=x+y-x$, then $-x+x_1+x-x+x_2+x-x\cdots+x-x+x_n+x=-x+x_1+x_2+\cdots+x_n+x=y$ and so
$T(y)\geq \overline{S}(-x+x_1+x)\wedge \overline{S}(-x+x_2+x)\wedge\cdots\wedge \overline{S}(-x+x_n+x)=
\overline{S}(x_1)\wedge \overline{S}(x_2)\wedge\cdots\wedge \overline{S}(x_n)$. It follows that
$T(x+y-x)\leq T(y)$. In a similar way, we can show that $T(y)\leq T(x+y-x)$.
Hence $T$ satisfies the conditions {\rm (i)}--{\rm (iii)} of Theorem \ref{S-positive cone}.
So by applying Theorem \ref{S-positive cone}, we can make an $L$-order relation on $G$.

(ii) Let $\alpha\in L-\{1\}$ and $A$ be a subset of $G$ such that $A$ is closed under $+$,
$0\notin A$ and $x\in A$ implies that $-x\in G-A$ for all $x\in G$.
If $S(x)=\alpha$ for all $x\in A$, $S(x)\leq \alpha$,
for all $x\in G-(A\cup\{0\})$ and $T$ is the map which is defined in (i), then by (i),
$T$ satisfies on conditions {\rm (i)}--{\rm (iii)} of Theorem \ref{S-positive cone}.
Now, we define $H:G\ra L$ by
$H(x)=T(x)$ for all $x\in G-A$ and $T(x)=1$, for all $x\in A$. For each $x\in G$, $H(x)=H(-x)=1$ implies that $x=0$ and $H(0)=1$. Put $x,y\in G$.
\begin{itemize}
\item[(1)] If $x,y\in G-A$, then $H(x)\wedge H(y)=T(x)\wedge T(y)\leq T(x+y)\leq H(x+y)$.

\item[(2)] If $x\in G-A$ and $y\in A$, then $T(x)\wedge T(y)=T(x)\wedge\alpha=T(x)=H(x)\wedge\alpha=H(x)\wedge 1=H(x)\wedge H(y)$
and $T(x)\wedge T(y)\leq T(x+y)\leq H(x+y)$, so $H(x)\wedge H(y)\leq H(x+y)$.

\item[(3)] If $x\in A$ and $y\in G-A$, then similarly to (2), we can show that $H(x)\wedge H(y)\leq H(x+y)$.

\item[(4)] If $x\in A$ and $y\in A$,  by assumption $x+y\in A$, and so $H(x)\wedge H(y)\leq 1=H(x+y)$.
\end{itemize}
From (1)-(4), it follows that $H$  satisfies the conditions {\rm (i)}--{\rm (iii)} of Theorem \ref{S-positive cone}.
Applying Theorem \ref{S-positive cone}, we can make an $L$-order relation on $G$. We must note that $G$ with this $L$-order relation is
an $L$-ordered group.
\end{rmk}

In the sequel, two results about distributivity in $L$-lattice ordered groups are verified. First,
we show that if $(G;e,+,0)$ is an $L$-lattice, then one of the conditions in (\ref{dis}) implies that
$G$ is distributive. Then we find a condition under which $G$ must be distributive.

\begin{lem}
Let $(G;e,+,0)$ be an $L$-lattice ordered group. Then the following are equivalent:
\begin{itemize}
\item[{\rm (i)}] $(G;e,+,0)$ is distributive.
\item[{\rm (ii)}] $a\wedge \sqcup S=\sqcup (a\wedge S)$ for all $a\in G$ and $S\in L^{G}_{\bullet }$.
\item[{\rm (iii)}] $a\vee \sqcap S=\sqcap (a\vee S)$ for all $a\in G$ and $S\in L^{G}_{\bullet }$.
\end{itemize}
\end{lem}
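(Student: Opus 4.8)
The plan is as follows. By Definition \ref{3.3}, (i) says precisely that both (ii) and (iii) hold, so (i)$\Rightarrow$(ii) and (i)$\Rightarrow$(iii) are immediate, and conversely, once the equivalence (ii)$\Leftrightarrow$(iii) is established, (i) follows for free. Thus the entire content of the lemma is the equivalence of (ii) and (iii), and I would prove it by deriving each from the other, applying it with the arguments $a$ and $S$ replaced by $-a$ and $-S$ and then negating the resulting identity. The tools for this are Proposition \ref{4.3}(v), which gives $-(\sqcup T)=\sqcap(-T)$ and $-(\sqcap T)=\sqcup(-T)$ whenever the relevant join or meet exists, together with its specialisation to $T=\chi_{\{x,y\}}$ (note $-\chi_{\{x,y\}}=\chi_{\{-x,-y\}}$), which yields the crisp De Morgan laws $-(x\wedge y)=(-x)\vee(-y)$ and $-(x\vee y)=(-x)\wedge(-y)$ in $(G;\leq_e)$.

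For (ii)$\Rightarrow$(iii), fix $a\in G$ and $S\in L^{G}_{\bullet}$. Since $supp(-S)=-supp(S)$ is finite, $-S\in L^{G}_{\bullet}$, and similarly $(-a)\wedge(-S)\in L^{G}_{\bullet}$, so by Corollary \ref{lattice} every join and meet below exists. Applying (ii) to $-a$ and $-S$ gives $(-a)\wedge\sqcup(-S)=\sqcup\big((-a)\wedge(-S)\big)$, and I negate both sides. On the left, the De Morgan law together with Proposition \ref{4.3}(v) turns $-\big((-a)\wedge\sqcup(-S)\big)$ into $a\vee\big(-\sqcup(-S)\big)=a\vee\sqcap S$. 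On the right, Proposition \ref{4.3}(v) gives $-\sqcup\big((-a)\wedge(-S)\big)=\sqcap\big(-((-a)\wedge(-S))\big)$, so it remains to identify the fuzzy set $-\big((-a)\wedge(-S)\big)$ with $a\vee S$; unwinding the definitions of the fuzzy sets $a\wedge S$ and $a\vee S$ from Definition \ref{3.3}, for $y\in G$,
\begin{eqnarray*}
\big(-((-a)\wedge(-S))\big)(y)&=&((-a)\wedge(-S))(-y)=\vee\{(-S)(x)|\ (-a)\wedge x=-y\}\\
&=&\vee\{S(w)|\ (-a)\wedge(-w)=-y\}=\vee\{S(w)|\ a\vee w=y\}=(a\vee S)(y),
\end{eqnarray*}
via the substitution $x=-w$ and $(-a)\wedge(-w)=-(a\vee w)$. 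Hence the negated identity is exactly $a\vee\sqcap S=\sqcap(a\vee S)$, i.e.\ (iii). Swapping the roles of $\wedge$/$\vee$ and $\sqcup$/$\sqcap$ throughout yields (iii)$\Rightarrow$(ii) in the mirror-image way.

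I expect the only genuinely delicate point to be the bookkeeping with the derived operations — verifying that negation intertwines $T\mapsto a\wedge T$ with $T\mapsto a\vee T$ on $L^{G}_{\bullet}$ (the displayed membership-function computation, which itself reduces to the crisp identity $(-a)\wedge(-w)=-(a\vee w)$) and that it maps $L^{G}_{\bullet}$ into itself. Everything else — existence of all the joins and meets involved, and the crisp De Morgan laws themselves — is immediate from Corollary \ref{lattice}, Proposition \ref{3.2} and Proposition \ref{4.3}(v), so there is no real obstacle beyond this routine verification.
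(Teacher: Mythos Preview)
Your proof is correct and follows essentially the same route as the paper: both arguments reduce the lemma to the equivalence (ii)$\Leftrightarrow$(iii), establish it by applying the given distributive identity to $-a$ and $-S$ and then negating via Proposition~\ref{4.3}(v) and the crisp De Morgan laws in $(G;\leq_e)$, and both carry out the identical membership-function computation showing $-\big((-a)\wedge(-S)\big)=a\vee S$. The only cosmetic differences are that the paper organises the implications as a cycle (i)$\Rightarrow$(ii)$\Rightarrow$(iii)$\Rightarrow$(i) and cites Remark~\ref{4.2}(i) with Theorem~\ref{group-dis} for the De Morgan laws, whereas you derive them directly from Proposition~\ref{4.3}(v) applied to $\chi_{\{x,y\}}$.
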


\begin{proof}
(i) $\Rightarrow$ (ii) Clearly, if $(G;e,+,0)$ is distributive, then (ii) and (iii) hold.

(ii) $\Rightarrow$ (iii) Let $a\in G$ and $S$ be an arbitrary element of $L^{G}_{\bullet }$.
Then there exist $x_1,x_2,\ldots,x_n\in G$
such that $Supp(S)=\{x_1,x_2,\ldots,x_n\}$. Clearly, $Supp(-S)=\{-x_1,-x_2,\ldots,-x_n\}$, so $(-S)\in L^{G}_{\bullet }$.
Hence, we have
\begin{eqnarray*}
a\vee \sqcap S&=&a\vee\sqcap -(-S)\\&=&-(-a)\vee(-\sqcup(-S)), \mbox{ by Proposition \ref{4.3}(v)},\\
&=& -(-a\wedge (\sqcup(-S))),\mbox{ by Remark \ref{4.2}(i) and Theorem \ref{group-dis}},\\
&=& -(\sqcup(-a)\wedge (-S)), \mbox{ by (ii)},\\
&=&\sqcap-((-a)\wedge (-S)), \mbox{ by (ii)}.
\end{eqnarray*}
Now we show that $-((-a)\wedge (-S))=a\vee S$. Put $y\in G$.
\begin{eqnarray*}
-((-a)\wedge (-S))(y)&=&((-a)\wedge (-S))(-y)\\&=&\vee_{\{t\in G|\ (-a)\wedge t=-y\}}(-S)(t)\\
&=&\vee_{\{t\in G|\ (-a)\wedge -(-t)=-y\}}S(-t)\\
&=& \vee_{\{t\in G|\ -(a\vee (-t))=-y\}}S(-t), \mbox{ by Remark \ref{4.2}(i) and Theorem \ref{group-dis}},\\
&=& \vee_{\{t\in G|\ a\vee (-t)=y\}}S(-t)\\
&=& \vee_{\{u\in G|\ a\vee u=y\}}S(u)\\
&=& (a\vee S)(y).
\end{eqnarray*}
(iii) $\Rightarrow$ (i) It suffices to show that $a\wedge \sqcup S=\sqcup (a\wedge S)$,
for all $a\in G$ and $S\in L^{G}_{\bullet }$. The proof of this
part is similar to the proof of ((ii) $\Rightarrow$ (iii)).
\end{proof}

\begin{thm}\label{4.6}
Let $(G;e,+,0)$ be an $L$-ordered group, $a\in G$ and $S\in L^{G}_{\bullet }$ such that $J=\sqcup S$.
Then $a\wedge \sqcup S=\sqcup (a\wedge S)$ if and only if
$\wedge_{y\in G}[S(y)\ra e(x,(a\wedge J)-(a\wedge y))]\leq \wedge_{y\in G}(S(y)\ra e(x,J-y))$ for all $x\in G$.
\end{thm}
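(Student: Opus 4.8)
The plan is to rewrite the distributivity equality $a\wedge\sqcup S=\sqcup(a\wedge S)$ via the join-criterion of Theorem~\ref{join and meet}(i), to normalise the resulting identity using the translation invariance of $e$, and then to recognise the displayed inequality as exactly the one half of the normalised identity that is not automatic. First I would apply Theorem~\ref{join and meet}(i) with $J=\sqcup S$: the equality $a\wedge\sqcup S=\sqcup(a\wedge S)$ holds iff $e(a\wedge J,x)=\wedge_{y\in G}\big((a\wedge S)(y)\ra e(y,x)\big)$ for all $x\in G$. Expanding $(a\wedge S)(y)=\vee\{S(t)\mid a\wedge t=y\}$ and using that $(\vee_i c_i)\ra d=\wedge_i(c_i\ra d)$ in a frame, the right-hand side collapses to $\wedge_{y\in G}\big(S(y)\ra e(a\wedge y,x)\big)$. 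Then I would normalise the two entries of these $e$'s: Proposition~\ref{4.3}(ii) turns $e(a\wedge J,x)$, $e(a\wedge y,x)$ into $e(-x,-(a\wedge J))$, $e(-x,-(a\wedge y))$, and Proposition~\ref{4.3}(i) (adding $a\wedge J$ on the left and renaming the free variable $x$) brings the criterion to the form: $a\wedge\sqcup S=\sqcup(a\wedge S)$ iff
\[
e(x,0)=\wedge_{y\in G}\big(S(y)\ra e(x,(a\wedge J)-(a\wedge y))\big)\qquad\text{for all }x\in G.
\]

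Next I would compute the right-hand side of the stated inequality and show that $\wedge_{y\in G}\big(S(y)\ra e(x,J-y)\big)=e(x,0)$ for every $x$, with no hypothesis on $a$: indeed $e(x,J-y)=e(x+y,J)$ by Proposition~\ref{4.3}(i), and the substitution $u=x+y$, under which $S(y)=(x+S)(u)$, rewrites $\wedge_{y}\big(S(y)\ra e(x+y,J)\big)$ as $\wedge_{u}\big((x+S)(u)\ra e(u,J)\big)$; since $x+J=\sqcup(x+S)$ by Proposition~\ref{4.3}(iii), Theorem~\ref{join and meet}(i) gives $\wedge_{u}\big((x+S)(u)\ra e(u,J)\big)=e(x+J,J)=e(x,0)$, the last equality again by Proposition~\ref{4.3}(i). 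Hence the inequality in the statement says precisely that $\wedge_{y\in G}\big(S(y)\ra e(x,(a\wedge J)-(a\wedge y))\big)\le e(x,0)$ for all $x\in G$.

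It then remains to check that the reverse inequality $e(x,0)\le\wedge_{y\in G}\big(S(y)\ra e(x,(a\wedge J)-(a\wedge y))\big)$ holds unconditionally; together with the two previous observations this shows that the stated inequality is equivalent to the equality in the first display, hence, by Theorem~\ref{join and meet}(i), to $a\wedge\sqcup S=\sqcup(a\wedge S)$. For the reverse inequality I would start from (J1), $S(y)\le e(y,J)$; since $a\wedge-$ is monotone by Proposition~\ref{4.3}(vii), $e(y,J)\le e(a\wedge y,a\wedge J)$, and Proposition~\ref{4.3}(i) rewrites the latter as $e(0,(a\wedge J)-(a\wedge y))$. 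Thus $S(y)\le e(0,(a\wedge J)-(a\wedge y))$, so (E2) yields $e(x,0)\wedge S(y)\le e(x,0)\wedge e(0,(a\wedge J)-(a\wedge y))\le e(x,(a\wedge J)-(a\wedge y))$, i.e.\ $e(x,0)\le S(y)\ra e(x,(a\wedge J)-(a\wedge y))$ for each $y$; taking the meet over $y$ finishes it. The step I expect to give the most friction is the identification $\wedge_{y}\big(S(y)\ra e(x,J-y)\big)=e(x,0)$ in the second paragraph, together with keeping the several variable substitutions and the two-sided translation invariance straight; once those are handled the rest is formal manipulation with the Heyting implication.
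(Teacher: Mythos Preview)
Your proof is correct and follows essentially the same approach as the paper's: both reduce the distributivity identity to the normalised criterion $e(x,0)=\wedge_{y}\big(S(y)\ra e(x,(a\wedge J)-(a\wedge y))\big)$, establish $\wedge_{y}\big(S(y)\ra e(x,J-y)\big)=e(x,0)$ unconditionally, and obtain the automatic inequality $e(x,0)\le\wedge_{y}\big(S(y)\ra e(x,(a\wedge J)-(a\wedge y))\big)$ from $S(y)\le e(y,J)\le e(a\wedge y,a\wedge J)$ via monotonicity of $a\wedge-$. The only cosmetic difference is that for the converse the paper repackages the computation by setting $T=(a\wedge J)+(-(a\wedge S))$ and showing $\sqcap T=0$, whereas you verify the join criterion for $a\wedge J=\sqcup(a\wedge S)$ directly; unwinding $T$ shows these are the same calculation.
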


\begin{proof}
Suppose that $a\wedge \sqcup S=\sqcup (a\wedge S)$. By Theorem \ref{adjoint}, for all $x\in G$, we have
\begin{eqnarray*}
e(a\wedge J,x)&=&\wedge_{y\in G}[(a\wedge S(y))\ra e(y,x)]\\
&=&\wedge_{y\in G}[(\vee_{t\in G, \ a\wedge t=y}S(t))\ra e(y,x)]\\
&=& \wedge_{y\in G}\wedge_{\{t\in G|\ a\wedge t=y\}}[S(t)\ra e(y,x)]\\
&=& \wedge_{y\in G}[S(y)\ra e(a\wedge y,x)].
\end{eqnarray*}
It follows from Proposition \ref{4.3}(i) that
$$e(x,0)=e(a\wedge J,-x+(a\wedge J))=\wedge_{y\in G}[S(y)\ra e(a\wedge y,-x+(a\wedge J))]=\wedge_{y\in G}[S(y)\ra e(x,(a\wedge J)-(a\wedge y))].$$
Also, by Theorem \ref{join and meet}(i), $e(J,x)=\wedge_{y\in G}(S(y)\ra e(y,x))$ for all $x\in G$, so
\begin{eqnarray}
\label{R4.6} e(x,0)=e(0,-x)=e(J,-x+J)=\wedge_{y\in G}(S(y)\ra e(y,-x+J))=\wedge_{y\in G}(S(y)\ra e(x,J-y)).
\end{eqnarray}
Therefore, $\wedge_{y\in G}(S(y)\ra e(x,J-y))=\wedge_{y\in G}[S(y)\ra e(x,(a\wedge J)-(a\wedge y))]$ for all $x\in G$. Conversely, let
$\wedge_{y\in G}[S(y)\ra e(x,(a\wedge J)-(a\wedge y))]\leq \wedge_{y\in G}(S(y)\ra e(x,J-y))$ for all $x\in G$.
By Proposition \ref{4.3}, it suffices to show that
$\sqcap T=0$, where $T=(a\wedge J)+(-(a\wedge S))$ (By Proposition \ref{4.3}(iv) and (v),  $\sqcap T$ exists).
For each $y\in G$, we have
\begin{eqnarray*}
T(y)=\vee_{\{t\in G| \ (a\wedge J)+t=y\}}(-(a\wedge S)(t))&=&(-(a\wedge S))(-(a\wedge J)+y)\\
&=&(a\wedge S)(-y+(a\wedge J))\\
&=&\vee_{\{t\in G| \ a\wedge t=-y+(a\wedge J)\}}S(t)\\
&=&\label{R2.4.5}\vee_{\{t\in G| \ y=(a\wedge J)-( a\wedge t)\}}S(t).
\end{eqnarray*}
It follows that
\begin{eqnarray*}
\wedge_{y\in G}T(y)\ra e(x,y)&=&\wedge_{y\in G}[(\vee_{t\in G, \ y=(a\wedge J)-( a\wedge t)}S(t))\ra e(x,y)]\\
&=& \wedge_{y\in G}\wedge_{t\in G, \ y=(a\wedge J)-( a\wedge t)} (S(t)\ra e(x,y))\\
&=&\wedge_{y\in G} [S(y)\ra e(x,(a\wedge J)-( a\wedge y))]\\
&\leq& \wedge_{y\in G}(S(y)\ra e(x,J-y)), \mbox{ by assumption, }\\
&=&e(x,0),\mbox{ by (\ref{R4.6})}.
\end{eqnarray*}
Now we show that $T(x)\leq e(0,x)$ for all $x\in G$. For all $x\in G$, $T(x)=\vee_{t\in G, \ x=(a\wedge J)-( a\wedge t)}S(t)$. It suffices to prove that
if $x=(a\wedge J)-( a\wedge t)$, then $S(t)\leq e(0,x)$ for all $t\in G$, or equivalently,
$S(t)\leq e(0,(a\wedge J)-( a\wedge t))=e(a\wedge t,a\wedge J)$ for all $t\in G$.
Since $\sqcup S=J$, then $1=e(J,J)=\wedge_{t\in G}(S(t)\ra e(t,J))$, so
$S(t)\leq e(t,J)$ for all $t\in G$.
Hence, by Proposition \ref{3.4.2}(iii), we have $S(t)\leq e(t,J)\leq e(a\wedge t,a\wedge J)$. Therefore, $\sqcap T=0$.
\end{proof}

There is a well known result on lattice ordered group, which prove that if $x,y$ are elements of lattice
ordered group $(G;.,1)$ such that $x^n\leq y^n$, for some $n\in\mathbb{N}$, then $x\leq y$ (see \cite[Theorem 9.11]{Blyth}). In the next theorem
we try to extend this result for $L$-lattice ordered groups.

\begin{thm}\label{4.7}
Let $(G;e,+,0)$ be an $L$-lattice ordered group.
\begin{itemize}
\item[{\rm(i)}] For each $z\in G$ and $n\in \mathbb{N}$, $e(z,0)=e(nz\vee 0,(n-1)(z\vee 0))$.
\item[{\rm(ii)}] If $x,y\in G$ such that $x+y=y+x$ and $n\in \mathbb{N}$, then $e(x,y)=e(nx\vee ny, (n-1)(x\vee y)+y)$.
\end{itemize}
\end{thm}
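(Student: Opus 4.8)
The plan is to notice first that part (ii) is only a change of variable away from part (i), so the substantive work is entirely in (i). Throughout I would work inside the crisp lattice ordered group $(G;+,0,\le_e)$ --- available by Remark \ref{4.2}(i) --- and use translation invariance of $e$ (Proposition \ref{4.3}(i)), the identity $e(x\vee y,a)=e(x,a)\wedge e(y,a)$ of Proposition \ref{3.4.2}(ii), and the fact (Theorem \ref{group-dis}) that translations are lattice automorphisms of $(G;\le_e)$.

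For (i), I would fix $z\in G$, write $z^{+}=z\vee 0$ and $z^{-}=(-z)\vee 0$ in $(G;\le_e)$, so that $z=z^{+}-z^{-}$ with $z^{+}\wedge z^{-}=0$, and reduce the statement to the $\ell$-group identity $nz\vee 0=n(z\vee 0)$ (i.e. $(nz)^{+}=nz^{+}$). Granting this, and noting $(n-1)(z\vee 0)=(n-1)z^{+}$, I would translate by $(n-1)z^{+}$ on the right (Proposition \ref{4.3}(i)) and then apply Proposition \ref{3.4.2}(ii):
\[
e\bigl(nz\vee 0,\,(n-1)(z\vee 0)\bigr)=e\bigl(nz^{+},(n-1)z^{+}\bigr)=e(z^{+},0)=e(z\vee 0,0)=e(z,0)\wedge e(0,0)=e(z,0).
\]
This is exactly (i), and --- worth stressing --- no induction on $n$ is needed: once the identity is in hand, the general case is formally no harder than $n=1$.

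The one point needing care is the identity $nz\vee 0=n(z\vee 0)$, valid in any (possibly non-abelian) $\ell$-group. I would prove it by recalling that disjoint elements of an $\ell$-group commute (classical; see \cite{Blyth}); hence $z^{+}$ and $z^{-}$ commute, so $nz=nz^{+}-nz^{-}$, while $nz^{+}\wedge nz^{-}=0$ (this follows from $z^{+}\wedge z^{-}=0$ by the standard fact that $a\wedge b=0$ forces $ka\wedge b=0$ and hence $ka\wedge kb=0$). Then $nz^{+}$ is the positive part of $nz$ and so $nz\vee 0=nz^{+}$. If one prefers not to invoke ``disjoint elements commute'', the identity can be reached directly: $n(z\vee 0)\ge nz\vee 0$ is immediate from monotonicity, and the reverse inequality falls out after expanding $nz$ via $z=z^{+}-z^{-}$ and checking $nz^{+}\wedge nz^{-}=0$. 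I expect this $\ell$-group identity to be the main obstacle; everything else is routine bookkeeping with translations.

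For (ii), I would put $z=x-y$. From $x+y=y+x$ it follows that $x=z+y=y+z$, so $z$ commutes with $y$, and then $z\vee 0$ commutes with $y$ as well, since $(z\vee 0)+y=(z+y)\vee(0+y)=(y+z)\vee(y+0)=y+(z\vee 0)$ by Theorem \ref{group-dis}. Using that translations are lattice automorphisms one computes $nx\vee ny=(nz+ny)\vee(0+ny)=(nz\vee 0)+ny$, $x\vee y=(z\vee 0)+y$, and hence $(n-1)(x\vee y)+y=(n-1)(z\vee 0)+(n-1)y+y=(n-1)(z\vee 0)+ny$. Translating by $-ny$ on the right (Proposition \ref{4.3}(i)) and applying part (i),
\[
e\bigl(nx\vee ny,\,(n-1)(x\vee y)+y\bigr)=e\bigl(nz\vee 0,\,(n-1)(z\vee 0)\bigr)=e(z,0)=e(z+y,y)=e(x,y),
\]
which completes the proof.
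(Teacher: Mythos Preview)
Your proof is correct, and part (ii) is handled essentially as in the paper (the same substitution $z=x-y$ and the same translation arithmetic, though you are more careful than the paper about justifying the commutativity of $z\vee 0$ with $y$).

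For part (i), however, you take a genuinely different route. You rely on the $\ell$-group identity $(nz)^{+}=nz^{+}$, i.e.\ $nz\vee 0=n(z\vee 0)$, which you justify via the decomposition $z=z^{+}-z^{-}$, the disjointness $z^{+}\wedge z^{-}=0$, and the fact that disjoint elements commute. The paper instead uses only the elementary expansion $m(z\vee 0)=mz\vee(m-1)z\vee\cdots\vee z\vee 0$ (immediate from the distributivity of translation over $\vee$ and $z+0=0+z$), then regroups it as $n(z\vee 0)=(nz\vee 0)\vee(n-1)(z\vee 0)$, and finishes with Proposition~\ref{3.4.2}(ii):
\[
e\bigl(n(z\vee 0),(n-1)(z\vee 0)\bigr)=e\bigl(nz\vee 0,(n-1)(z\vee 0)\bigr)\wedge e\bigl((n-1)(z\vee 0),(n-1)(z\vee 0)\bigr)=e\bigl(nz\vee 0,(n-1)(z\vee 0)\bigr).
\]
The trade-off: the paper's argument is lighter on $\ell$-group theory (no positive/negative parts, no ``disjoint elements commute''), while yours is shorter once the identity $(nz)^{+}=nz^{+}$ is in hand and makes the structural reason for the result more transparent.
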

\begin{proof}
(i) Put $z\in G$. By Remark \ref{4.2}, $(G;\leq_e)$ is a lattice ordered group. Since $0+z=z+0$, it is easy
to see that $m(z\vee 0)=mz\vee (m-1)z\vee (m-2)z\vee \cdots \vee z\vee 0$ for all $m\in \mathbb{N}$.
\begin{eqnarray*}
e(z,0)&=&e(z,0)\wedge e(0,0)\\&=&e(0\vee z,0), \mbox{ by Proposition \ref{3.4.2}(ii), }\\
&=& e(n(z\vee 0),(n-1)(z\vee 0)), \mbox{ by Proposition \ref{4.3}(i)}\\
&=& e(nz\vee (n-1)z\vee\cdots \vee z\vee 0,(n-1)(z\vee 0))\\
&=& e(nz\vee (n-1)z\vee\cdots \vee z\vee 0\vee 0,(n-1)(z\vee 0))\\
&=& e((nz\vee 0)\vee (n-1)(z\vee 0),(n-1)(z\vee 0))\\
&=& e(nz\vee 0,(n-1)(z\vee 0))\wedge e((n-1)(z\vee 0),(n-1)(z\vee 0)), \mbox{ by Proposition \ref{3.4.2}(ii), }\\
&=& e(nz\vee 0,(n-1)(z\vee 0)).
\end{eqnarray*}
(ii) Let $x,y\in G$. Then by (i), we  have
\begin{eqnarray*}
e(x,y)&=&e(x-y,0)=e(n(x-y)\vee 0,(n-1)((x-y)\vee 0))= e(nx-ny\vee 0,(n-1)((x\vee y)-y))\\
&=& e((nx\vee ny)-ny,(n-1)(x\vee y)-(n-1)y)= e(nx\vee ny, (n-1)(x\vee y)+y).
\end{eqnarray*}
Note that, the stated result before Theorem \ref{4.7} can be easily obtained from Theorem \ref{4.7}.
In fact, if $x^n\leq y^n$, for some $n\in\mathbb{N}$, then by Theorem \ref{4.7}(i),
$e(x,y)=e(nx\vee ny, (n-1)(x\vee y)+y)=e(ny,(n-1)(x\vee y)-y)=e((n-1)y,(n-1)(x\vee y))=1$ and so $x\leq y$
\end{proof}

\begin{thm}\label{4.8}{\rm (Riesz's theorem for $L$-lattice ordered groups)}
Let $(G;e,+,0)$ be an $L$-lattice ordered group and $t\in L$. If $a,b_1,b_2,\ldots, b_n\in G$ such that
$t\leq e(0,a)\wedge e(0,b_i)\wedge e(a,b_1+b_2+\cdots +b_n)$ for all $i\in\{1,2,\ldots, n\}$,
then there exist $a_1,a_2,\ldots,a_n\in G$ such that $a=a_1+a_2+\cdots+a_n$ and
$t\leq e(0,a_i)\wedge e(a_i,b_i)$ for all $i\in\{1,2,\ldots, n\}$.
\end{thm}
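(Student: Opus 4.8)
The plan is to argue by induction on $n$, reducing the general decomposition to the case $n=2$ and, in that case, writing down the decomposition explicitly in the fashion of the classical Riesz argument, carried out inside the lattice ordered group $(G;\leq_e)$ (which is legitimate by Remark \ref{4.2}). For $n=1$ the statement is immediate: take $a_1=a$, since the hypothesis already gives $t\leq e(0,a)\wedge e(a,b_1)$. For the inductive step, fix $n\geq 2$, assume the claim for $n-1$, and first record the auxiliary inequality $e(0,u+v)=e(-v,u)\geq e(-v,0)\wedge e(0,u)=e(0,v)\wedge e(0,u)$, which follows from (E2) and Proposition \ref{4.3}(i)--(ii); iterating it gives $t\leq e(0,b_2+\cdots+b_n)$.

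Now set $a_1:=a\wedge b_1$ and $a':=(-a_1)+a$, computed in $(G;\leq_e)$, so that $a_1+a'=a$. The easy verifications: since $a_1=a\wedge b_1\leq_e b_1$ we get $e(a_1,b_1)=1\geq t$; by Proposition \ref{3.4.2}(i), $e(0,a_1)=e(0,a\wedge b_1)=e(0,a)\wedge e(0,b_1)\geq t$; and since $a_1\leq_e a$, left translation yields $0=(-a_1)+a_1\leq_e(-a_1)+a=a'$, so $e(0,a')=1\geq t$. The crux is the estimate $t\leq e(a',b_2+\cdots+b_n)$. For this I would use that in the $\ell$-group $(G;\leq_e)$ the one-sided translations and the inversion map are lattice (anti-)morphisms (Theorem \ref{group-dis}), so $-(a\wedge b_1)=(-a)\vee(-b_1)$ and hence $a'=\bigl((-a)\vee(-b_1)\bigr)+a=0\vee\bigl((-b_1)+a\bigr)$. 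Writing $c:=b_2+\cdots+b_n$, Proposition \ref{3.4.2}(ii) gives $e(a',c)=e(0,c)\wedge e((-b_1)+a,c)$, while Proposition \ref{4.3}(i) gives $e((-b_1)+a,c)=e(a,b_1+c)=e(a,b_1+b_2+\cdots+b_n)\geq t$; combined with $e(0,c)\geq t$ this yields $e(a',c)\geq t$. Then the induction hypothesis applied to $a'$ and $b_2,\dots,b_n$ produces $a_2,\dots,a_n$ with $a'=a_2+\cdots+a_n$ and $t\leq e(0,a_i)\wedge e(a_i,b_i)$ for $i\geq 2$, and $a=a_1+a'=a_1+a_2+\cdots+a_n$ completes the proof.

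I expect the main obstacle to be precisely this last estimate. One must choose the \emph{right} candidate $a'=(-a_1)+a$ rather than $a-a_1$, so that $a_1+a'=a$ respects the prescribed \emph{order} of the summands in the possibly non-commutative setting, and then assemble the bound from the meet-compatibility of $e$ (Proposition \ref{3.4.2}), the translation invariance $e(x,y)=e(b+x+a,b+y+a)$ (Proposition \ref{4.3}(i)), and the crisp $\ell$-group identities for $-(x\wedge y)$ and for one-sided translations distributing over joins and meets. Everything else — the reduction from general $n$ to $n=2$ and the additivity bound $t\leq e(0,b_2+\cdots+b_n)$ — is routine bookkeeping.
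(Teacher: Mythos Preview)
Your argument is correct and is essentially the same as the paper's: both proceed by induction and split off one summand via the classical Riesz element $a\wedge b_i$, using Proposition~\ref{3.4.2}(i)--(ii) for the $e$-estimates and Proposition~\ref{4.3}(i) for translation invariance. The only cosmetic difference is that the paper peels off the \emph{last} term, setting $a_{n+1}=a\wedge b_{n+1}$ and $u=a-a_{n+1}=0\vee(a-b_{n+1})$, whereas you peel off the \emph{first}, setting $a_1=a\wedge b_1$ and $a'=(-a_1)+a=0\vee((-b_1)+a)$; the two are mirror images and the verifications are line-for-line analogous.
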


\begin{proof}
The result is established by induction. If $n=1$, the result is clear (in fact, $a_1=a$).
Let $1\leq n$ and the result is true for $n$.
Suppose that $b_1,b_2,\ldots, b_{n+1}\in G$ such that
$t\leq e(0,a)\wedge e(0,b_i)\wedge e(a,b_1+b_2+\cdots+ b_{n+1})$ for all $i\in\{1,2,\ldots, n+1\}$.
From (E2) and $t\leq e(0,b_1)\wedge e(0,b_2)=e(0,b_1)\wedge e(b_1,b_1+b_2)$ it follows that $t\leq e(0,b_1+b_2)$. In a similar way, we can show that
\begin{eqnarray}
\label{Riesz 1} t\leq e(0,b_1+b_2+\cdots+b_n)
\end{eqnarray}
So by assumption, for all $i\in\{1,2,\ldots,n\}$  we have
\begin{eqnarray*}
t&\leq& e(0,a)\wedge e(0,b_i)\wedge e(a,b_1+b_2+\cdots+ b_{n+1})\\
&=& e(0,a)\wedge e(0,b_i)\wedge e(a-b_{n+1},b_1+b_2+\cdots+ b_{n}), \mbox{ by Proposition \ref{4.3}(i)}\\
&=& e(0,a)\wedge e(0,b_i)\wedge e(a-b_{n+1},b_1+b_2+\cdots+ b_{n})\wedge e(0,b_1+b_2+\cdots+b_n), \mbox{ by (\ref{Riesz 1})}\\
&=& e(0,a)\wedge e(0,b_i)\wedge e(0\vee a-b_{n+1},b_1+b_2+\cdots+ b_{n}), \mbox{ by Proposition \ref{3.4.2}(ii)}.
\end{eqnarray*}
Since $(G;e,+,0)$ is an $L$-lattice ordered group, by Remark \ref{4.2}, $(G;\leq_e)$ is a crisp one and so
$0\vee a-b_{n+1}=a-(a\wedge b_{n+1})$. Set $u:=0\vee a-b_{n+1}$. Then
$t\leq e(0,u)\wedge e(0,b_i)\wedge  e(u,b_1+b_2+\cdots+ b_{n})$, for all $i\in\{1,2,\ldots,n\}$ (since $e(0,u)=1$), hence
by the induction hypothesis there are $a_1,a_2,\ldots,a_n\in G$ such that $u=a_1+a_2+\cdots+a_n$ and
$t\leq e(0,a_i)\wedge e(a_i,b_i)$ for all $i\in\{1,2,\ldots, n\}$. Thus $a=a_1+a_2+\cdots+a_n+(a\wedge b_{n+1})$, set $a_{n+1}=a\wedge b_{n+1}$.
Then $e(a_{n+1},b_{n+1})=1$ and by Proposition \ref{3.4.2}(i),
$e(0,a_{n+1})=e(0,a\wedge b_{n+1})=e(0,a)\wedge e(0,b_{n+1})\geq t$. Therefore,
the result is obtained.
\end{proof}

\begin{cor}\label{4.9}
Let $(G;e,+,0)$ be an $L$-lattice ordered group and $a,b,c\in G$. Then for each
$t\in L$, with $t\leq e(0,a)\wedge e(0,b)\wedge e(0,c)\wedge e(0,a\wedge (b+c))$, we have
$t\leq e(a\wedge (b+c),(a\wedge b)+(a\wedge c))$.
\end{cor}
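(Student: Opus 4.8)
The plan is to derive this as a special case of Riesz's theorem (Theorem \ref{4.8}). Apply Theorem \ref{4.8} with $n=2$, taking $a$ replaced by $a\wedge(b+c)$, and with $b_1:=b$ and $b_2:=c$. To invoke the theorem I must check its hypotheses for the element $a':=a\wedge(b+c)$: namely that $t\leq e(0,a')$, that $t\leq e(0,b_i)$ for $i=1,2$, and that $t\leq e(a',b_1+b_2)=e(a\wedge(b+c),b+c)$. The first two are immediate from the assumption $t\leq e(0,a)\wedge e(0,b)\wedge e(0,c)\wedge e(0,a\wedge(b+c))$, since $e(0,a')$ is one of the listed conjuncts and $e(0,b),e(0,c)$ are too. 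For the third, note that $a\wedge(b+c)\leq_e b+c$ holds in the poset $(G;\leq_e)$ (Proposition \ref{3.2} guarantees the meet exists and lies below both arguments), so $e(a\wedge(b+c),b+c)=1\geq t$.

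Having verified the hypotheses, Theorem \ref{4.8} yields elements $a_1,a_2\in G$ with $a\wedge(b+c)=a_1+a_2$ and $t\leq e(0,a_i)\wedge e(a_i,b_i)$ for $i=1,2$; that is, $t\leq e(0,a_1)\wedge e(a_1,b)$ and $t\leq e(0,a_2)\wedge e(a_2,c)$. The remaining task is to bound $e(a\wedge(b+c),(a\wedge b)+(a\wedge c))$ from below by $t$. Since $a\wedge(b+c)=a_1+a_2$, by Proposition \ref{4.3}(i) this equals $e(a_1+a_2,(a\wedge b)+(a\wedge c))$. The idea is to show $a_1\leq_e a\wedge b$ and $a_2\leq_e a\wedge c$ up to the error measured by $t$, then combine. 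From $t\leq e(a_1,b)$ and, separately, a bound $t\leq e(a_1,a)$, we would get $t\leq e(a_1,a\wedge b)$ by Proposition \ref{3.4.2}(i); similarly $t\leq e(a_2,a\wedge c)$. Then translation-invariance (Proposition \ref{4.3}(i)) and transitivity (E2), combined via $e(a_1,a\wedge b)\wedge e(a_2,a\wedge c)\leq e(a_1+a_2,(a\wedge b)+(a\wedge c))$ — which follows from (FOG) applied twice, exactly as in the proof of Proposition \ref{convex-subgroup} — give the claim.

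The one gap in the outline above is producing the bounds $t\leq e(a_1,a)$ and $t\leq e(a_2,a)$, which do not come directly from Theorem \ref{4.8}. Here is how I would close it. From $a_1+a_2=a\wedge(b+c)\leq_e a$ we get $e(a_1+a_2,a)=1$, so $e(a_1, a-a_2)=1$ by Proposition \ref{4.3}(i), i.e. $a_1\leq_e a-a_2$. Now $t\leq e(0,a_2)$ means $0\leq_e a_2$, hence $-a_2\leq_e 0$ by Proposition \ref{4.3}(ii) (or directly), so $a-a_2\leq_e a$, and therefore $a_1\leq_e a$, giving $e(a_1,a)=1\geq t$. Symmetrically, $a_2\leq_e -a_1+a$ from $e(a_1+a_2,a)=1$, and $0\leq_e a_1$ gives $-a_1\leq_e 0$, hence $a_2\leq_e a$ and $e(a_2,a)=1\geq t$. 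With these in hand the computation of the previous paragraph goes through.

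\textbf{Expected main obstacle.} The genuinely delicate point is the bookkeeping that turns the two adjointness-style inequalities $t\leq e(a_1,b)$, $t\leq e(a_2,c)$ (plus the crisp facts $a_1\leq_e a$, $a_2\leq_e a$) into a single inequality $t\leq e(a_1+a_2,(a\wedge b)+(a\wedge c))$. One must apply Proposition \ref{3.4.2}(i) to split each meet on the right, then apply (FOG) twice — first adding $a_2$ on the right of the $a_1$-inequality, then adding $a\wedge b$ on the left of the $a_2$-inequality — and finally chain with (E2); each step is routine but the order matters and it is easy to misplace a $\wedge t$. Everything else (verifying the hypotheses of Theorem \ref{4.8}, the crisp order manipulations) is immediate.
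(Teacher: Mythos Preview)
Your overall strategy---apply Theorem \ref{4.8} with $a'=a\wedge(b+c)$, $b_1=b$, $b_2=c$, obtain $a_1,a_2$, then show $t\leq e(a_i,a\wedge b_i)$ and combine---is exactly the paper's approach. But there is a real error in how you close the ``one gap'': you write ``$t\leq e(0,a_2)$ means $0\leq_e a_2$''. This is false. The relation $0\leq_e a_2$ means $e(0,a_2)=1$, whereas $t\leq e(0,a_2)$ for an arbitrary $t\in L$ says nothing of the sort. Consequently your crisp conclusions $a_1\leq_e a$ and $a_2\leq_e a$ (i.e.\ $e(a_1,a)=e(a_2,a)=1$) are unjustified, and in fact need not hold.

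The fix is to stay in the fuzzy setting rather than pass to $\leq_e$. From $a_1+a_2=a\wedge(b+c)\leq_e a$ you do have the crisp fact $e(a_1+a_2,a)=1$. Combine this with $t\leq e(0,a_2)=e(a_1,a_1+a_2)$ (Proposition \ref{4.3}(i)) via (E2):
\[
t\leq e(a_1,a_1+a_2)\wedge e(a_1+a_2,a)\leq e(a_1,a).
\]
This gives $t\leq e(a_1,a)$, not $e(a_1,a)=1$, but that is all you need: together with $t\leq e(a_1,b)$ and Proposition \ref{3.4.2}(i) you get $t\leq e(a_1,a\wedge b)$. The argument for $a_2$ is symmetric. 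This is precisely how the paper handles it (its displayed chain is slightly garbled but this is the content). With this correction the rest of your outline goes through unchanged.
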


\begin{proof}
Let $t\leq e(0,a)\wedge e(0,b)\wedge e(0,c)$. By
Proposition \ref{3.4.2}(i), $e(0,a\wedge (b+c))=e(0,a)\wedge e(0,b+c)\geq e(0,a)\wedge e(0,b)\wedge
e(b,b+c)=e(0,a)\wedge e(0,b)\wedge e(0,c)\geq t$.
Also,
$e(a\wedge (b+c),b+c)=1$, so by Theorem \ref{4.8}, there exist $x_1,x_2\in G$ such that
$a\wedge (b+c)=x_1+x_2$, $t\leq e(0,x_1)\wedge e(x_1,b)$ and $t\leq e(0,x_2)\wedge e(x_2,b)$.
Hence, $e(0,x_1)\wedge e(x_1,a)=e(0,x_1)\wedge e(x_1,x_1+x_2)= e(0,x_1)\wedge e(0,x_2)\geq t$, whence
$t\leq e(x_1,a)$ and  by Proposition \ref{3.4.2}(i), $t\leq e(x_1,a)\wedge e(x_1,b)=e(x_1,a\wedge b)$.
In a similar way, we can show that $t\leq e(x_2,a\wedge c)$. Therefore,
\begin{eqnarray*}
e(a\wedge (b+c),(a\wedge b)+(a\wedge c))&=&e(x_1+x_2,(a\wedge b)+(a\wedge c))\\
&\geq& e(x_1+x_2,(a\wedge b)+x_2)\wedge e((a\wedge b)+x_2,(a\wedge b)+(a\wedge c))\\
&=&e(x_1,a\wedge b)\wedge e(x_2,a\wedge c)\geq t, \mbox{ by Proposition \ref{4.3}(i)}.
\end{eqnarray*}
\end{proof}





\end{document}